\documentclass[11pt,reqno]{amsart}

\usepackage{amssymb,amsthm,amsmath,mathtools}
\usepackage{mathdots,bm}
\usepackage[left=2.5cm, right=2.5cm, top=3cm, bottom=3cm]{geometry}
\usepackage{titlesec}
\titleformat{\subsection}[hang]{\normalfont\bfseries}{\thesubsection}{1em}{}
\usepackage{secdot}
\usepackage{ifthen}
\usepackage{mathabx}
\usepackage{hyperref}
\usepackage{float}
\usepackage{import}
\usepackage{graphicx}   
\usepackage{listings}
\usepackage{color} 
\usepackage{xcolor}
\definecolor{codegreen}{rgb}{0,0.6,0}
\definecolor{codegray}{rgb}{0.5,0.5,0.5}
\definecolor{codepurple}{rgb}{0.58,0,0.82}
\definecolor{backcolour}{rgb}{0.95,0.95,0.92}
\lstdefinestyle{mystyle}{
    backgroundcolor=\color{backcolour},   
    commentstyle=\color{codegreen},
    keywordstyle=\color{magenta},
    numberstyle=\tiny\color{codegray},
    stringstyle=\color{codepurple},
    basicstyle=\ttfamily\footnotesize,
    breakatwhitespace=false,         
    breaklines=true,                 
    captionpos=b,                    
    keepspaces=true,                 
    numbers=left,                    
    numbersep=6pt,                  
    showspaces=false,                
    showstringspaces=false,
    showtabs=false,                  
    tabsize=4
}

\usepackage{multirow}  
\usepackage{float}
\usepackage{tcolorbox}
\usepackage[numbered]{matlab-prettifier}
\usepackage{diagbox}
\usepackage{cleveref}
\usepackage{diagbox}
\usepackage{algorithm}
\usepackage{algpseudocode}
\usepackage{caption}
\usepackage{subcaption}
\usepackage{textcomp,multirow}
\theoremstyle{plain}
\newtheorem{thm}{Theorem}[section]
\newtheorem*{thm*}{thm}

\numberwithin{equation}{section}

\newtheorem{lem}{Lemma}[section]
\newtheorem{prop}{Proposition}[section]

\newtheorem{rem}{Remark}[section]

\theoremstyle{definition}
\newcounter {own}
\def\theown {\thesection  .\arabic{own}}

\newenvironment{pf}[1][]{%
 \vskip 2mm
 \noindent
 \ifthenelse{\equal{#1}{}}%
  {{\slshape Proof. }}%
  {{\slshape #1.} }%
 }%
{\qed\bigskip}
\graphicspath{ {./images/} }

\DeclareMathOperator*{\supp}{\supp}
\DeclareMathOperator*{\rank}{rank}

\newcounter{alphabet}

\newcommand{\R}{\mathbb{R}}

\newcommand{\N}{\mathbb{N}}
\newcommand{\Z}{\mathbb{Z}}

\newcommand{\qtq}[1]{\quad\text{#1}\quad}

\newtheorem{theorem}{Theorem}[section]

\newtheorem{conjecture}[theorem]{Conjecture}
\theoremstyle{definition}

\theoremstyle{remark}

\newcommand{\ds}{\displaystyle}
\begin{document}
%\bibliographystyle{amsplain}
%bibliographystyle{abbrvnat}
%\bibliography{res_poly_curve}
\title[Gabor Frame Regions and Non-Frame Obstructions for an even Rational Window]{Gabor Frame Regions and Non-Frame Obstructions for an even Rational Window}
\thanks{Indian Institute of Technology (IIT) Bombay, Mumbai, India, 400076\\
Email: riya74012@gmail.com}
\author{Riya Ghosh}

\subjclass[2020]{42C15, 42A82}
\keywords{Frame set, Rational window, extended complete Chebyshev systems, Zibulski-Zeevi matrix, Zak transform}

\begin{abstract}
We study the Gabor frame properties of the rational window
$$ g(x)=\frac{x^2-1}{(x^2+1)(x^2+4)(x^2+9)}, $$
whose poles occur in symmetric pairs. We prove that every lattice satisfying $0<\alpha\beta<1/3$ generates a frame, and we establish an additional frame region for $\beta\ge1$ and $1/3\le \alpha\beta<0.47373$. Furthermore, we show that the rational hyperbolas $\alpha\beta=\frac{p}{3p-1}$, for $p\ge2,$ are entirely contained within the frame set. In contrast, we construct explicit non-frame lattice points on the hyperbolas
$$\alpha\beta\in\left\{\frac{1}{3},\frac{1}{2},\frac{2}{3},\frac{3}{4}\right\}.$$ 
Finally, for the density family $\alpha\beta=p/(p+1)$, we derive a symmetry reduction of the associated Zibulski–Zeevi matrix, providing numerical evidence for a richer structure of non-frame obstructions.
\end{abstract}
\date{\today}
\maketitle
\pagestyle{myheadings}
\markboth{Riya Ghosh}{Gabor Frames for an even Rational Window with Symmetric Poles}
\section{introduction}
\label{introduction}
Let $g\in L^2(\mathbb{R})$ be a non-zero window function with lattice parameters $\alpha, \beta > 0$. The corresponding Gabor system  $\mathcal{G}(g,\alpha, \beta)=\{e^{2\pi i\beta m\cdot}g(\cdot-\alpha k): k, m\in\mathbb{Z}\}$ is called a Gabor frame for $L^2(\mathbb{R})$ if there exist two positive constants $A$, $B$ such that 
\begin{equation}\label{Gaborframe}
A\|f\|^2\leq\displaystyle\sum_{m,k\in\mathbb{Z}}|\langle f,e^{2\pi i\beta m\cdot} g(\cdot-\alpha k)\rangle|^2\leq B\|f\|^2,
\end{equation}
for every $f\in L^2(\mathbb{R})$. The constants $A$ and $B$ are called frame bounds. The set of all such lattice parameters is referred to as the frame set of $g$ and is given by
$$\mathcal{F}(g)= \left\{(\alpha, \beta) \in\mathbb{R}^2_+ : \mathcal{G}(g, \alpha,\beta)~ \text{is a frame}\right\} .$$
In this paper, we use the following version of the Fourier transform: 
$$\widehat f(w)= \int_{-\infty}^{\infty} f(x) e^{-2\pi \mathrm{i}wx}dx,~ w\in\mathbb{R}.$$
Feichtinger and Kaiblinger \cite{HGF1} proved that $\mathcal{F}(g)$ is an open subset of $\mathbb{R}^2_+$ for a window $g$ in the Feichtinger algebra. The fundamental density theorem asserts that 
$$\mathcal{F}(g)\subseteq \{(\alpha, \beta) \in\mathbb{R}^2_+: \alpha\beta\le 1\}$$ 
(see \cite{time, hedtg}). In addition, if $g$ is in the Feichtinger algebra, the Balian-Low \cite{balian, rdbalian} theorem states that 
$$\mathcal{F}(g)\subseteq \{(\alpha, \beta) \in\mathbb{R}^2_+: \alpha\beta< 1\}.$$
For a more comprehensive discussion on Gabor analysis, we refer to \cite{ole, time}. 

The determination of the frame set of a given window is a difficult problem, and complete descriptions are known only for a limited number of generators. Important examples include the Gaussian $e^{-\pi x^2}$ \cite{fblyu,dtsibf2}, the hyperbolic secant \cite{hsyg}, the one-sided exponential $e^{-x}\chi_{[0,\infty)}(x)$ \cite{whfb}, the two-sided exponential $e^{-|x|}$ \cite{tgfcd}, characteristic functions $\chi_{[0,c)}$, $c>0$, \cite{abc, when}, totally positive functions \cite{stsis, duke,pont2026gabor}, the Haar function \cite{Haar}, the first Hermite function \cite{faulhuber2026frame}, and rational Herglotz functions whose poles lie entirely in one half-plane \cite{Yurii:rational:2023}. Beyond these complete descriptions, several works have obtained substantial partial frame regions for broader classes of generators. General background on Gabor frames and frame-set problems can be found in \cite{ole, feichtinger2012advances, feichtinger2012gabor,grochenig2014mystery}.

The present work is motivated by the fact that the Gabor frame behavior changes substantially when one passes from an odd to an even rational window with symmetrically placed poles. In our previous study \cite{ghosh2026gabor}, the generator was odd, and the oddness itself imposed a strong obstruction by the Lyubarskii--Nes \cite{lyubarskii2013gabor}, the Gabor system fails to be a frame whenever
$$ \alpha\beta=\frac{p}{p+1},\quad p\in\mathbb N. $$
Thus, for the odd window, a substantial part of the non-frame structure is already forced by symmetry.

In the present paper, we turn to the even rational window
\begin{align}\label{window}
g(x)=\frac{x^2-1}{(x^2+1)(x^2+4)(x^2+9)}.
\end{align}
Its poles again occur in symmetric pairs, namely $\pm ik$, $k=1,2,3$, and $g\in \mathcal S_0(\mathbb R)\cup W(L^\infty,\ell^1)$. Despite this similar pole configuration, the frame-set problem changes substantially. The obstruction available in the odd case disappears, so neither frame nor non-frame behavior is dictated by parity alone. This raises two natural questions: whether the symmetric pole structure still gives rise to nontrivial frame regions, and whether entire rational hyperbolas can be contained in the frame set. The results of this paper show that all of these phenomena indeed occur.

The frame-set problem for the window \eqref{window} is not covered by
several existing complete characterizations. Belov \textit{et al.}
\cite{Yurii:rational:2023} treated rational Herglotz functions whose
poles lie in a common half-plane, while Ulanovskii and Zlotnikov
\cite{ulanovskii2025sampling} considered generators given by ratios of
exponential polynomials. In contrast, \eqref{window} is an algebraic
rational function with symmetric poles at $\pm ik$, $k=1,2,3$. Semenov
\cite{semenov2025} recently established a frame result for a broad
class of rational functions, including the present window, for suitably
constructed nonuniform frequency sets. This does not, however,
determine the frame set for the rectangular lattices considered here.

A central difficulty is that the frame behavior of the even window is not determined by the rational density $\alpha\beta$ alone. Although Kulikov \cite{kulikov2025} showed that the frame set of a rational window is relatively open along every fixed-density hyperbola, such a hyperbola need not be entirely contained in the frame set. For the present window, the hyperbola $\alpha\beta=\frac12 $
contains both frame and non-frame lattice points. This makes it necessary to study the actual two-parameter geometry of the frame set rather than only the density parameter.

The main contributions of the paper reflect this distinction. We first establish new explicit regions contained in the frame set and prove that the hyperbolas
$\alpha\beta=\frac{p}{3p-1}$, for $p\ge2$
form an infinite family of frame hyperbolas. In contrast, we construct explicit non-frame points on the hyperbolas
$\alpha\beta\in \left\{ \frac13,\frac12,\frac23,\frac34 \right\},$
showing that these densities do not give universal frame curves.

For the family
$$ \alpha\beta=\frac{p}{p+1}$$ for $p\in \N$
where the odd-window obstruction would have been automatic, we instead derive a new symmetry reduction of the Zibulski--Zeevi matrix. This produces finite square blocks whose determinant vanishing forces rank loss. Numerical computations based on this reduction indicate obstruction points for $2\le p\le200$, and reveal multiple obstruction branches; notably, one branch reaches the region $\beta>1$.

These results show that passing from an odd to an even rational generator is not a minor variation of the same frame-set problem. The parity change removes the known universal obstruction, modifies the symmetry of the associated polynomial structure, and leads to a more delicate interaction between frame and non-frame behavior. This provides the main reason for studying the even window independently. We summarize the principal results below. Figure \ref{fig:Figure1} illustrates all results.

\begin{thm}\label{frame_region0}
The Gabor system $\mathcal G(g,\alpha,\beta)$ forms a frame for $L^2(\R)$ whenever $0<\alpha\beta<\frac{1}{3}$. 
\end{thm}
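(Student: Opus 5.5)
The plan is to pass to the Fourier side, where the window has a very rigid form, and then use a zero-counting (Chebyshev-system) argument for the kernel of the pre-Gramian.

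\textbf{Step 1: the Fourier transform of $g$.} Partial fractions give
$$g(x)=-\frac{1}{12}\frac{1}{x^2+1}+\frac13\frac{1}{x^2+4}-\frac14\frac{1}{x^2+9}.$$
Using $\widehat{(x^2+a^2)^{-1}}(\xi)=\frac{\pi}{a}e^{-2\pi a|\xi|}$, this yields
$$\widehat g(\xi)=-\frac{\pi}{12}\,e^{-2\pi|\xi|}\bigl(1-e^{-2\pi|\xi|}\bigr)^2 .$$
So on each half-line, $\widehat g$ is a cubic polynomial in $z=e^{-2\pi|\xi|}$ lying in the span of $e^{-2\pi|\xi|},e^{-4\pi|\xi|},e^{-6\pi|\xi|}$. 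It has a double zero at $\xi=0$ and is of one sign elsewhere. Since $\mathcal G(g,\alpha,\beta)$ is a frame if and only if $\mathcal G(\widehat g,\beta,\alpha)$ is, and $g\in W(L^\infty,\ell^1)$, I will work with $h=\widehat g$.

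\textbf{Step 2: reduction to a kernel problem.} I will use the Ron--Shen characterization in the form employed by Gröchenig--Romero--Stöckler. If $\mathcal G(h,\beta,\alpha)$ is not a frame, then there exist $x\in\R$ and a nonzero $c\in\ell^\infty(\Z)$ such that
$$F(t)=\sum_{j\in\Z}c_j\,h\!\left(t-\tfrac{j}{\alpha}\right)$$
vanishes on the whole coset $x+\beta\Z$. The task is then to show that no nonzero bounded $c$ can produce this many zeros when $\alpha\beta<1/3$.

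\textbf{Step 3: zero counting on each cell.} On an interval $I=(\frac{j}{\alpha},\frac{j+1}{\alpha})$, every term with $j'\le j$ contributes combinations of $e^{-2\pi m t}$ and every term with $j'>j$ contributes combinations of $e^{2\pi m t}$, for $m=1,2,3$. Hence $F|_I$ lies in the span of $e^{\pm 2\pi m t}$, $m=1,2,3$, which is an extended complete Chebyshev system of dimension $6$. The cell has length $1/\alpha$, and because $\alpha\beta<1/3$ it contains more than $3$ points of $x+\beta\Z$; that is, at least $3$ zeros per cell on average, strictly more over long stretches.

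I would then argue globally, in the style of the totally positive and Herglotz papers. Restrict to a long window of $N$ consecutive cells. The coefficients of the $\pm$ parts at consecutive breakpoints are linked by the jump structure of $h$: $h$ is continuous, $h'$ is continuous at $0$ because of the double zero, and the first nonzero jump occurs in $h''$. This means $F$ is determined by roughly $3$ free parameters per breakpoint plus boundary terms. Meanwhile the number of prescribed zeros exceeds $3N$ by an amount linear in $N$. A Budan--Fourier / generalized Rolle count for piecewise exponential splines with these smoothness conditions then forces a contradiction. Boundedness of $c$ together with the exponential decay of $h$ controls the tails, so the boundary terms contribute only $O(1)$ extra zeros. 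Equivalently, one can phrase this through the Zibulski--Zeevi matrix, which, written in the variable $z$, becomes a banded Toeplitz-like matrix built from the cubic $z(1-z)^2$; the density condition $\alpha\beta<1/3$ then guarantees full rank, in line with the degree-$3$ threshold.

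\textbf{Main obstacle.} The hard step is the zero count for these piecewise exponential splines. Because $h$ is only $C^1$ at the knots, with a jump in $h''$, a naive Rolle argument loses zeros at every breakpoint. I would handle this by the Schoenberg--Whitney / ECT-spline counting of Gröchenig--Romero--Stöckler adapted to multiplicity-$2$ knots: show that $F$ has at most $3N+O(1)$ zeros (counted suitably) on $N$ cells unless $c\equiv 0$. The double zero of $h$ at the origin, which makes each breakpoint a knot of multiplicity $2$ rather than $3$, is exactly what should make the bound come out as $3$ per cell and match the threshold $1/3$.
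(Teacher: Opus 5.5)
Your architecture is sound: you use the Gröchenig--Romero--Stöckler $\ell^\infty$-kernel criterion, where the paper proves the quantitative Ron--Shen lower bound, and that is a legitimate variant. However, the step that is supposed to produce the threshold $1/3$ rests on a wrong smoothness claim and is never carried out.

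With $u=2\pi|\xi|$ one has $e^{-u}-2e^{-2u}+e^{-3u}=u^2-2u^3+O(u^4)$. So $h\in C^2(\mathbb R)$, and the first jump is in $h'''$, not in $h''$. Writing $(a_1,a_2,a_3)=(1,-2,1)$, the difference of the two analytic pieces at a knot is a multiple of $\sum_r a_r\sinh(2\pi r t)$. This vanishes to exactly third order, because $\sum_r r a_r=0$ and $\sum_r r^3 a_r=12\neq0$. Evenness makes $h''$ continuous for free once $h'$ is.

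Three matching conditions per knot give an enveloping spline space of dimension $6N-3(N-1)=3N+3$ on $N$ cells, i.e.\ order $6$ with interior knots of multiplicity $3$. Taken literally, your description (jump already in $h''$) gives only two conditions per knot, hence dimension $4N+2$ and only $\alpha\beta<1/4$. Likewise, ``knots of multiplicity $2$'' in an order-$6$ space would mean $C^3$ smoothness. So ``$3$ per cell'' is the right number, but nothing you wrote justifies it.

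Second, your formulation ``$F$ has at most $3N+O(1)$ zeros on $N$ cells unless $c\equiv 0$'' is false as stated, so no Budan--Fourier or Rolle count can prove it. Take $c$ supported on $\{m-3,\dots,m\}$ and solving the three equations $\sum_k c_k e^{2\pi r k/\alpha}=0$, $r=1,2,3$. Then $F\equiv0$ on $(m/\alpha,\infty)$ although $c\neq0$.

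What is true, and what the paper uses, is the generalized Schoenberg--Whitney theorem for this Chebyshevian spline space:
\begin{itemize}
\item On a window of $N$ cells, take three coset points in each cell plus three further ones.
\item Check the interlacing $t_{3j}<\kappa_j<t_{3j+4}$ at each interior knot $\kappa_j$. This is possible because $1/(\alpha\beta)>3$ puts at least three points in every cell, and for $N$ large the window contains at least $3N+3$ points.
\item Conclude that a spline vanishing at these nodes vanishes on the whole window.
\end{itemize}
No tail control is needed: for bounded $c$, the restriction of $F$ to any window lies exactly in this spline space. You must then add the step you omit, that $F\equiv0$ on $\mathbb R$ forces $c=0$. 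This holds because the jump of $F'''$ at $j/\alpha$ equals $2c_j h'''(0^+)$, and $h'''(0^+)\neq0$.

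With these repairs your qualitative route works and is somewhat shorter than the paper's. The paper must make the collocation constant uniform in the phase by compactness, sum over overlapping windows, and then use the Riesz-sequence property of the integer translates of $h_\lambda$ (positivity of $G_\lambda$). Your route instead relies on the $\ell^\infty$ characterization for windows in the Feichtinger algebra. The remark about a banded Toeplitz Zibulski--Zeevi matrix is unsupported and should be dropped.
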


\begin{thm}\label{frame_region1}
Let $a^*\in \left(\frac{1}{3},\frac{1}{2}\right)$ be the unique solution of 
$$\frac{ 2e^{-2\pi(1/x-2)} }{ (1-e^{-2\pi/x})(1-e^{-2\pi})^2 }=1.$$
Then $\mathcal G(g,\alpha,\beta)$ forms a frame for $\beta\ge1$ and $\frac{1}{3}\le \alpha\beta<a_*$. 
\end{thm}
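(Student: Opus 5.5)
The plan is to pass to the Fourier side, where the window becomes a simple one-signed exponential profile, and then prove a uniform lower bound for the Ron--Shen pre-Gramian by a diagonal-dominance (Schur/Gershgorin) argument. The threshold equation for $a^*$ should come out as exactly the statement that the off-diagonal mass is smaller than the diagonal.

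\emph{Step 1: the Fourier transform of $g$.} Partial fractions give
$$g(x)=-\frac{1}{12}\frac{1}{x^2+1}+\frac13\frac{1}{x^2+4}-\frac14\frac{1}{x^2+9}.$$
Using $\widehat{(x^2+a^2)^{-1}}(\xi)=\frac{\pi}{a}e^{-2\pi a|\xi|}$, this yields
$$\widehat g(\xi)=-\frac{\pi}{12}\,u(1-u)^2,\qquad u=e^{-2\pi|\xi|}.$$
So $\widehat g$ is even and of one sign. It vanishes only at $\xi=0$ (and at infinity), and it is increasing in $u$ on $(0,1/3)$. The factor $(1-e^{-2\pi})^2$ in the defining equation of $a^*$ is exactly $(1-u)^2$ evaluated at $|\xi|=1$. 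Since the Fourier transform is unitary and maps time--frequency shifts to time--frequency shifts with the roles of $\alpha$ and $\beta$ exchanged, $\mathcal G(g,\alpha,\beta)$ is a frame if and only if $\mathcal G(\widehat g,\beta,\alpha)$ is a frame.

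\emph{Step 2: the pre-Gramian criterion.} I will use the Ron--Shen criterion for $\mathcal G(\widehat g,\beta,\alpha)$: the system is a frame if and only if the bi-infinite matrices
$$P(\xi)=\bigl(\widehat g(\xi-j\beta-k/\alpha)\bigr)_{j,k\in\Z}$$
are uniformly bounded below for a.e.\ $\xi$. Equivalently, after the Zibulski--Zeevi reduction, the finite-row matrices are uniformly bounded below. Set $x=\alpha\beta\in[1/3,a^*)$, so that $1/\alpha=\beta/x\ge\beta\ge1$.

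For each fixed $\xi$ and each row $j$, I will select a dominant column $k(j)$: the point $\xi-j\beta-k/\alpha$ that lies in a distinguished window of length comparable to $1$ near (but not at) the origin. This choice avoids the zero of $\widehat g$ at the origin. The condition $\beta\ge1$ guarantees that distinct rows pick distinct dominant columns, so that we get an injective "diagonal".

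\emph{Step 3: bounding the off-diagonal mass.} The off-diagonal entries in a row then sit at distance at least roughly $1/x-2$ beyond the dominant point and are spaced by $1/\alpha$ (respectively by $\beta$ in columns). Bounding $|\widehat g|\le \frac{\pi}{12}e^{-2\pi|\cdot|}$ on these entries and summing the geometric series gives the factor
$$\frac{e^{-2\pi(1/x-2)}}{1-e^{-2\pi/x}}$$
on each of the two sides, hence the factor $2$. The diagonal entry, by monotonicity of $u(1-u)^2$, is bounded below by a quantity containing $(1-e^{-2\pi})^2$. Schur's test, in row and column form, then shows that the off-diagonal part has norm at most $\theta(x)$ times the diagonal lower bound, where
$$\theta(x)=\frac{2e^{-2\pi(1/x-2)}}{(1-e^{-2\pi/x})(1-e^{-2\pi})^2}.$$
A Neumann-series argument gives a uniform lower bound on $P(\xi)$ whenever $\theta(x)<1$.

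\emph{Step 4: the range of $x$.} The function $\theta$ is increasing in $x$, since $1/x$ decreases. One checks $\theta(1/3)<1<\theta(1/2)$, which gives existence and uniqueness of $a^*\in(1/3,1/2)$ and $\theta(x)<1$ exactly on $[1/3,a^*)$.

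\emph{Main obstacle.} The hard part is Step 2–3: choosing the dominant entry so that it is simultaneously bounded away from the zero at the origin and from infinity, for every $\xi$ and every row. It must also be arranged so that the worst-case neighbours are exactly at distance $1/x-2$. I would first try placing the dominant point in the interval $[1,1+1/\alpha)$, using the evenness of $\widehat g$ to reflect into positive arguments. If that does not give an injective diagonal, I would instead work with a square Zibulski--Zeevi block and apply Gershgorin directly.
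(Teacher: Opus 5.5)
\textbf{There is a genuine gap, in the selection step that you flag as the main obstacle.} Your framework is the same as the paper's: pass to the Fourier side, bound the Ron--Shen pre-Gramian below by diagonal dominance plus Schur's test, and reduce to $\theta(x)<1$. Your final $\theta(x)$ and Step 4 are also correct. But the selection, as you set it up, cannot work.
\begin{itemize}
\item In $P(\xi)=\bigl(\widehat g(\xi-j\beta-k/\alpha)\bigr)_{j,k}$ the coefficient vector $c$ is indexed by the columns $k$, which have spacing $1/\alpha=\beta/x$. The rows $j$ have spacing $\beta$, so there are $1/x>2$ rows per column.
\item Assigning to each row $j$ a dominant column $k(j)$ with the dominant point in a bounded window therefore cannot be injective. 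A counting argument using $\alpha\beta<1$ shows that about $1/x$ consecutive rows share a column, whatever $\beta$ is; so $\beta\ge1$ does not give injectivity.
\item Even an injective row-to-column assignment would single out a column-submatrix. That gives no lower bound on $\|P(\xi)c\|$ for all $c\in\ell^2$.
\item Schur's test also cannot be applied to the full $P(\xi)$. Each column contains two entries with argument of modulus at most $\beta$, which can be equal to each other (when the column point sits midway between two row points). So the off-diagonal column sums are not small relative to the diagonal. Your ``spacing $\beta$ in columns'' would in any case put $1-e^{-2\pi\beta}$, not $1-e^{-2\pi/x}$, in the denominator.
\end{itemize}

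\textbf{The missing idea is the reverse selection: pass to a row-submatrix.} Work in the normalized form $P_\xi=\bigl(h_\lambda(\xi+aj-k)\bigr)$ with $a=\alpha\beta$ and $\lambda=2\pi/\alpha$. For each column $k$, take the \emph{farther} of the two lattice points $\xi+aj$ bracketing $k$. This gives $y_k=\xi+aj_k$ with $a/2\le|y_k-k|\le a$.
\begin{itemize}
\item Injectivity of $k\mapsto j_k$ comes from $2a<1$, not from $\beta\ge1$.
\item Dropping rows only decreases $\|P_\xi c\|$, so a lower bound for the selected square matrix suffices.
\end{itemize}
The hypothesis $\beta\ge1$ enters in two other places.
\begin{itemize}
\item Since $\lambda a/2=\pi\beta>\log 3$, $h_\lambda$ is decreasing on $[a/2,\infty)$. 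So every diagonal entry is at least $h_\lambda(a)=\rho(1-\rho)^2$ with $\rho=e^{-2\pi\beta}$. This is what keeps the diagonal away from the zero at the origin. The natural scale here is $\beta$, not a window ``of length comparable to $1$''.
\item Using $|y_k-l|\ge|k-l|-a$, both the off-diagonal row sums and column sums of the selected matrix are at most $2e^{-\lambda(1-a)}/(1-e^{-\lambda})$. The resulting dominance ratio is $f(a,\rho)=2\rho^{1/a-2}/\bigl((1-\rho^{1/a})(1-\rho)^2\bigr)$. It is increasing in $\rho$ because $a<1/2$, so $f(a,\rho)\le f(a,e^{-2\pi})$. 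This is exactly your $\theta(a)$.
\end{itemize}
Your proposal is missing both this column-by-column selection and the monotone reduction from general $\beta\ge1$ to $\beta=1$. With them in place, the rest of your Steps~3--4 goes through.
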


\begin{thm}\label{frame_region2}
$\mathcal G(g,\alpha,\beta)$ forms a frame along the hyperbolas 
$\alpha\beta=\frac{p}{3p-1},$
for every integer $p\ge2$.
\end{thm}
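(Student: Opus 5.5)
Since $\alpha\beta=\frac{p}{3p-1}$ is rational, I will work with the Zibulski--Zeevi criterion. Write $\alpha\beta=p/q$ with $q=3p-1$; this fraction is in lowest terms, because $\gcd(p,3p-1)=\gcd(p,1)=1$. Then $\mathcal G(g,\alpha,\beta)$ is a frame if and only if the $p\times q$ matrix
$$\Phi(x,\omega)=\Bigl(Z_{1/\beta}g\bigl(x-\ell\tfrac{p}{\beta q},\,\omega+\tfrac{k}{p}\bigr)\Bigr)_{k<p,\ \ell<q}$$
has full rank $p$ for every $(x,\omega)$. By continuity and compactness, rank $p$ everywhere gives a uniform lower bound. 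To get at the Zak transform, I will use partial fractions. Write
$$g(x)=\sum_{j=1}^{3}c_j\Bigl(\frac{1}{x-ij}-\frac{1}{x+ij}\Bigr)$$
with explicit $c_j$; evenness makes the coefficients at $\pm ij$ opposite. The Fourier transform is then $\widehat g(\xi)=\sum_j d_j e^{-2\pi j|\xi|}$, a combination of two-sided exponentials. Poisson summation makes $Z_{1/\beta}g$ (or equivalently the Zak transform of $\widehat g$ with step $\beta$) explicit. Each column of $\Phi$ then becomes a combination of the vectors $(z^{k}e^{2\pi i k\omega/p})_k$ for $z=e^{\mp 2\pi j\cdot(\text{step})}$, weighted by geometric-series factors like $1/(1-e^{-2\pi j/\beta}e^{2\pi i\omega})$.

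The structural step is the following. Each exponential $e^{-2\pi j|\xi|}$ splits into two one-sided pieces. So each entry of $\Phi$ is a sum of at most six terms of the form $A_{j,\pm}(\omega)\,\lambda_{j,\pm}^{\,s}$, where $s$ is the shifted sample position. The rank question thus reduces to a generalized Vandermonde / extended Chebyshev system question. I want to exhibit $p$ columns (or $p$ linearly independent combinations of columns) forming a nonsingular minor. The key observation is that $q=3p-1$ means the $q$ shifts step the sampling point by $1/q$ of a period. After reindexing $\ell\mapsto \ell p \bmod q$, one obtains $q$ consecutive samples. There are three exponential families ($j=1,2,3$), each of which can contribute at most $p$ independent directions. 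The count $3p-1=3p-1$ suggests that one should pick columns so that on each period block of length $\approx q/3\approx p-1/3$ a single family dominates. I would split the $q$ columns into three groups of sizes $p,p,p-1$ (or use the two-sided structure). I would then show that a suitable $p\times p$ minor is a sum of a dominant Vandermonde-type determinant, coming from the $j=1$ (slowest decaying) term, plus perturbations.

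The cleanest route I will try first is a total-positivity / Chebyshev-system argument. For fixed $\omega$, consider the functions $s\mapsto$ entries. They belong to the span of $\{e^{\pm 2\pi j s}\}_{j=1}^3$ restricted to $s$ in one period after removing the discontinuity. On the interval $[0,1/\beta)$ with $q=3p-1$ nodes, I aim to prove that if $c\in\mathbb C^p$ annihilates all columns, then the function $F(s)=\sum_k \bar c_k Z g(s,\omega+k/p)$ vanishes at $3p-1$ points of a period. Using the $p$-quasi-periodicity, $F$ can be rewritten as a combination of at most $3p-2$ exponentials with distinct real/complex exponents, namely $p$ frequency shifts times three decay rates, less one constraint from the evenness relation of the coefficients. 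An extended complete Chebyshev system on one period has at most $3p-2$ zeros, so $3p-1$ zeros force $c=0$. The hypothesis $p\ge2$ should enter in ensuring that the samples land in distinct positions away from the jump of the one-sided pieces. It also ensures that the evenness-induced relation genuinely lowers the dimension by one; for $p=1$ the hyperbola is $\alpha\beta=1/2$, which contains non-frame points.

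The main obstacle is this zero-counting step. I need to verify that the relevant function space, with exponents $-2\pi j(\cdot)+2\pi i k/p$ and coefficients tied by $\widehat g$'s sign pattern, really is an extended complete Chebyshev system on the sampling interval. I also need to verify that its dimension is exactly $3p-2$ rather than $3p$. I would try to establish this via the Wronskian/ECT property of $\{e^{\lambda s}\}$ for real $\lambda$ combined with the dimension reduction coming from $g(\pm i)$-type constraints (the zero of $x^2-1$). If the counting is off, the fallback is a diagonal-dominance estimate on the reindexed minor, in the spirit of the bound defining $a^*$ in Theorem \ref{frame_region1}.
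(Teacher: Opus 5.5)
Your proposal has a genuine gap. The zero-counting step that everything rests on is not merely unverified: it is false, and no pure dimension count of this kind can prove the statement.

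\textbf{Why the count fails.} Work in the normalization $\mathcal G(h_\lambda,a,1)$. A kernel vector $c$ gives $F(\xi)=\sum_k c_k\,\mathcal Zh_\lambda(\xi,\omega+k/p)$, which vanishes at the $q=3p-1$ points $\xi_0-\ell a$. These are equispaced over one \emph{quasi-period of length $p$} of $F$, not over a unit period, because the columns pick up different phases $e^{2\pi i nk/p}$ from cell to cell. On each unit cell $F$ lies in the span of the \emph{six} functions $e^{\pm j\lambda\xi}$, $j=1,2,3$; you noted the two one-sided pieces yourself, then counted only three decay rates. Across the integer knots $F$ is merely $C^2$. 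So $F$ is a quasi-periodic exponential spline, and that space has dimension at least $6p-3p=3p>3p-1$. Such a space always contains a nonzero element vanishing at any $3p-1$ prescribed points. Injectivity can therefore only come from the special $p$-dimensional family $c\mapsto F$ inside it, never from a Chebyshev count.

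\textbf{A concrete counterexample.} Take $p=1$, $q=3$. Your count (``$p$ shifts times three decay rates'') allows at most two zeros per period. Yet at the paper's point $P_{1/3}$, the single function $t\mapsto\mathcal Z_{g_{\beta_0}}(t,1/2)$ vanishes at $t=1/6,1/2,5/6$; equivalently, $\mathcal P_{\beta_0}(z;1/2)$ has three zeros on $\mathbb T$. Your scheme would thus ``prove'' that all of $\alpha\beta=1/3$ lies in $\mathcal F(g)$, contradicting Theorem~\ref{non_frame_1}. Nothing in it explains why $p=1$ (that is, $\alpha\beta=1/2$, which contains $P_{1/2}$) fails while $p\ge2$ succeeds. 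The claim that ``evenness lowers the dimension by one'' has no mechanism behind it.

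\textbf{The fallback does not help.} Diagonal dominance as in Theorem~\ref{frame_region1} needs $h_\lambda$ to be concentrated, i.e.\ $\beta$ bounded below. Along the hyperbola, $\beta\to0$ forces $\lambda=2\pi\beta/a\to0$, where dominance fails. For $\beta\ge1$ these points already satisfy $1/3<p/(3p-1)\le 2/5<a^*$, so the fallback would only reproduce Theorem~\ref{frame_region1}.

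\textbf{What is missing: the window-specific algebra of the paper.} The paper works with $\Phi(z;x)$ and sorts the kernel equations by the residue of $r+pl$ modulo $q$.
\begin{itemize}
\item The choice $q=3p-1$ makes $l=3,4,5$ wrap to $r+1$, $r+p+1$, $r+2p+1$.
\item For $p\ge3$, the residues $p+1,\dots,2p-1$ give binomials $c_rm_{4,\beta}(x_r)z^4+c_{r+1}m_{1,\beta}(x_{r+1})z=0$. Since $m_{1,\beta}$ and $m_{4,\beta}=m_{1,\beta}(1-\cdot)$ are positive, every $c_r$ is nonzero.
\item The residues $p$ and $2p$ give $m_{2,\beta}(x_0)m_{2,\beta}(y_0)=m_{1,\beta}(x_0)m_{1,\beta}(y_0)$ with $y_0=1/p-x_0$.
\item Add the residues $2p+1$ and $1$, and the unique simple zeros of $m_{2,\beta}$ and of $F_\beta=m_{2,\beta}-m_{1,\beta}$. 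Here the ECT property is used for $(m_{0,\beta},m_{1,\beta},m_{2,\beta})$ in the frequency variable $x$, not for the Zak transform in time.
\item This yields both $m_{2,\beta}(5/6)>0$ and $F_\beta(1/3)\le0$. That contradicts Lemma~\ref{F_xi_lemma}(ii), which is proved by separating the roots of explicit polynomials in $s=e^{\tau/6}$.
\item The case $p=2$ needs its own elimination together with Lemma~\ref{F_xi_lemma}(iii).
\end{itemize}
These quantitative comparisons depend on the coefficients $(1,-2,1)$ of $h_\lambda$. They are exactly what separates the frame hyperbolas from the densities $1/3$ and $1/2$, where obstructions occur, and an abstract Chebyshev or dimension argument cannot detect them.
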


\begin{thm}\label{non_frame_1}
For each 
$$a\in \left\{\frac{1}{3},\frac{1}{2},\frac{2}{3},\frac{3}{4}\right\},$$
there exists a lattice point $(\alpha,\beta)\in \mathbb{R}_+^2$ satisfying $\alpha\beta=a$
such that $\mathcal G(g,\alpha,\beta)$ does not form a frame for $L^2(\mathbb R)$.
\end{thm}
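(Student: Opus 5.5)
The plan is to use the Zibulski--Zeevi characterization at rational density. Write $a=\alpha\beta=p/q$ with $\gcd(p,q)=1$. Then $\mathcal G(g,\alpha,\beta)$ is a frame if and only if the $p\times q$ Zibulski--Zeevi matrix
$$\Phi(x,\omega)=\Bigl(Z_{1/\beta}g\bigl(x-\ell\tfrac{p}{\beta q}... \bigr)\Bigr)$$
(Zak transforms $Z_{\lambda}g(x,\omega)=\sum_{k}g(x-\lambda k)e^{2\pi i\lambda k\omega}$ at the standard shifted arguments) has rank $p$ for every $(x,\omega)$. Here $g\in W(L^\infty,\ell^1)$ is continuous with continuous Zak transform, so "a.e." upgrades to "everywhere". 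To exhibit a non-frame point, it therefore suffices to find one lattice point and one $(x,\omega)$ at which all $p\times p$ minors vanish. For $a=1/3$ and $a=1/2$ this means that the entries of a single row, i.e.\ $3$ resp.\ $2$ Zak values, vanish simultaneously. For $a=2/3$ and $a=3/4$ it means a rank drop of a $2\times3$ resp.\ $3\times4$ matrix.

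\textbf{Step 1: closed form of the Zak transform.} Since $g$ is rational with simple poles at $\pm i,\pm2i,\pm3i$, I would apply Poisson summation (or a residue computation) to get
$$Z_\lambda g(x,\omega)=\sum_{j=1}^3 c_j\,\frac{\text{explicit combination of } e^{\pm 2\pi j(\cdot)}}{1-e^{-2\pi j\lambda}e^{\pm 2\pi i\lambda\omega}}.$$
This is a finite sum of hyperbolic-type functions in $x$ for $x\in[0,\lambda]$, with coefficients from the residues of $g$.

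\textbf{Step 2: parity gives free zeros.} Because $g$ is even, the terms $k$ and $1-k$ in $Z_\lambda g(\lambda/2,\,1/(2\lambda))$ carry the same modulus but opposite phases $(-1)^k$. Hence
$$Z_\lambda g\bigl(\lambda/2,1/(2\lambda)\bigr)=0$$
for every $\lambda$. Similarly, at $\omega=1/(2\lambda)$ the function $x\mapsto Z_\lambda g(x,\omega)$ becomes real up to a unimodular factor and satisfies an (anti)symmetry about $x=\lambda/2$.

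I would choose $(x,\omega)$ on this half-period line so that, in each row of $\Phi$, one entry vanishes by symmetry. The symmetry should also pair the remaining entries, so that the rank condition collapses to a single real equation $F_a(\beta)=0$ along the hyperbola $\alpha=a/\beta$. For $a=1/2$: one entry vanishes automatically, and the other must vanish at a shifted point, giving one real equation in $\beta$. For $a=1/3$ the three shifted points $x, x+\lambda/3, x+2\lambda/3$ include the symmetric one, and the other two are mirror images, so again one real equation remains. For $a=2/3$ and $a=3/4$, the analogous symmetry reduction should make the relevant minor real-valued along the half-period line, so that its vanishing is again a scalar equation.

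\textbf{Step 3: intermediate value argument.} Using the closed forms from Step 1, I would evaluate $F_a$ at two explicit values of $\beta$, for instance in the small-$\beta$ regime and near $\beta\approx1$. There I can bound the exponentially small tails $e^{-2\pi j/\beta}$ rigorously, which should show a sign change; continuity of $F_a$ then produces a root $\beta_a$, and the corresponding $(\alpha,\beta)=(a/\beta_a,\beta_a)$ is a non-frame point. The sign at one endpoint is governed by the sign change of $x^2-1$ in $g$: the dominant residues at $\pm i$ come with a negative coefficient against those at $\pm 2i,\pm3i$.

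The main obstacle is Step 2 for $a=2/3,3/4$. There I must verify that the symmetry truly reduces the rank condition to one real scalar function, rather than two independent equations, and then obtain certified sign estimates in Step 3. If the symmetry reduction fails there, I would instead fix $\beta$ and use both $x$ and $\beta$ as free parameters, applying a two-dimensional degree (winding) argument to the complex-valued minor.
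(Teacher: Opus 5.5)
Your framework (the Zibulski--Zeevi rank criterion, a parity reduction to one real equation in $\beta$, then the intermediate value theorem) is the paper's. For $a=1/3$ your reduction is exactly what the paper does. At $(t,x)=(1/6,1/2)$ the three entries sit at $1/6,-1/6,-1/2$. The last vanishes for every $\beta$, and the first two coincide by evenness. So everything reduces to $\mathcal Z_{g_\beta}(1/6,1/2)=f_\beta(u)H_1(u)=0$, with $H_1(0)=1$ and $H_1(1)=-23$.

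Your recipe for $a=1/2$, however, fails as stated. If one of the two entries is placed at the self-symmetric point $t=1/2$, the other lies at $t\equiv 0$, and there the Zak transform never vanishes. By Poisson summation
\[
\mathcal Z_{g_\beta}(0,x)=\sum_{n\in\mathbb Z}\widehat{g_\beta}(n-x),\qquad \widehat{g_\beta}(\xi)=-\tfrac{\pi\beta}{12}e^{-2\pi\beta|\xi|}\bigl(1-e^{-2\pi\beta|\xi|}\bigr)^2\le 0,
\]
with equality only at $\xi=0$. Hence $\mathcal Z_{g_\beta}(0,x)<0$ for all $x$ and $\beta$, and your scalar equation has no root. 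The paper instead takes $t=1/4$, so that the two entries at $\pm1/4$ are mirror images and neither vanishes for free. The resulting scalar $\mathcal Z h_\lambda(1/4,1/2)$ changes sign through the polynomial $H_2(r)$, $r=e^{-\pi\beta}$.

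The more serious gap is that you give no construction for $a=2/3$ and $a=3/4$, and your fallback cannot supply one. For a $3\times 2$ (resp.\ $4\times 3$) matrix, rank deficiency requires all maximal minors to vanish simultaneously, so a zero of one complex minor found by a degree argument proves nothing. A dimension count points the same way: the rank-deficient locus has real codimension $4$, while you only have the three real parameters $t,x,\beta$. So the drop has to be forced by an exact relation among the entries, and the paper supplies two such relations.

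For $a=2/3$ nothing new is computed. At the $\beta_0$ from the case $a=1/3$, $\mathcal Z_{g_{\beta_0}}(\cdot,1/2)$ vanishes on the whole coset $\{1/6,1/2,5/6\}+\mathbb Z$, which is invariant under the row shifts by $2/3$. Hence at $(t,x)=(5/6,0)$ the column of frequency $1/2$ is identically zero and the rank is at most $1$.

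For $a=3/4$ the paper works with $h_\lambda$ at $(t,\xi)=(1/2,0)$ and the column combination $C_1-e^{2\pi i/3}C_2$. Evenness and quasi-periodicity make its first and third entries vanish identically. The other two are, up to sign, $\overline{A}-e^{2\pi i/3}A$ with $A=\mathcal Z h_\lambda(1/4,1/3)$, which is a fixed unimodular factor times a real number; explicitly they equal $\pm\Gamma(r)H_1(r)$. The rank drop therefore again reduces to the single real equation $H_1(r)=0$, now with $r=e^{-2\pi\beta/3}$.
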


The remainder of the paper is organized as follows. Section \ref{section_2} develops the algebraic and structural tools used throughout the paper. We derive the reciprocal symmetry of the generating polynomial, establish sign and zero properties of the coefficient functions through Wronskian estimates and extended complete Chebyshev systems, and recall the Zak transform and Zibulski--Zeevi formulations of the frame condition. Section \ref{Section_3} is devoted to the positive frame results; we prove the new frame regions in Theorems \ref{frame_region0} and \ref{frame_region1}, the universal frame hyperbolas in Theorem \ref{frame_region2}, and show in addition that $(1/2,1)$ is a frame point. Section \ref{Section_4} treats the complementary non-frame phenomena. We first establish the critical-density obstruction and the explicit obstruction points of Theorem \ref{non_frame_1}. We then study the family $\alpha\beta=p/(p+1)$ through a symmetry reduction of the Zibulski--Zeevi matrix and conclude with numerical evidence for the resulting determinant condition, including the appearance of a second obstruction branch for large $p$.

\begin{figure}[H]
    \centering
    \includegraphics[width=6 in]{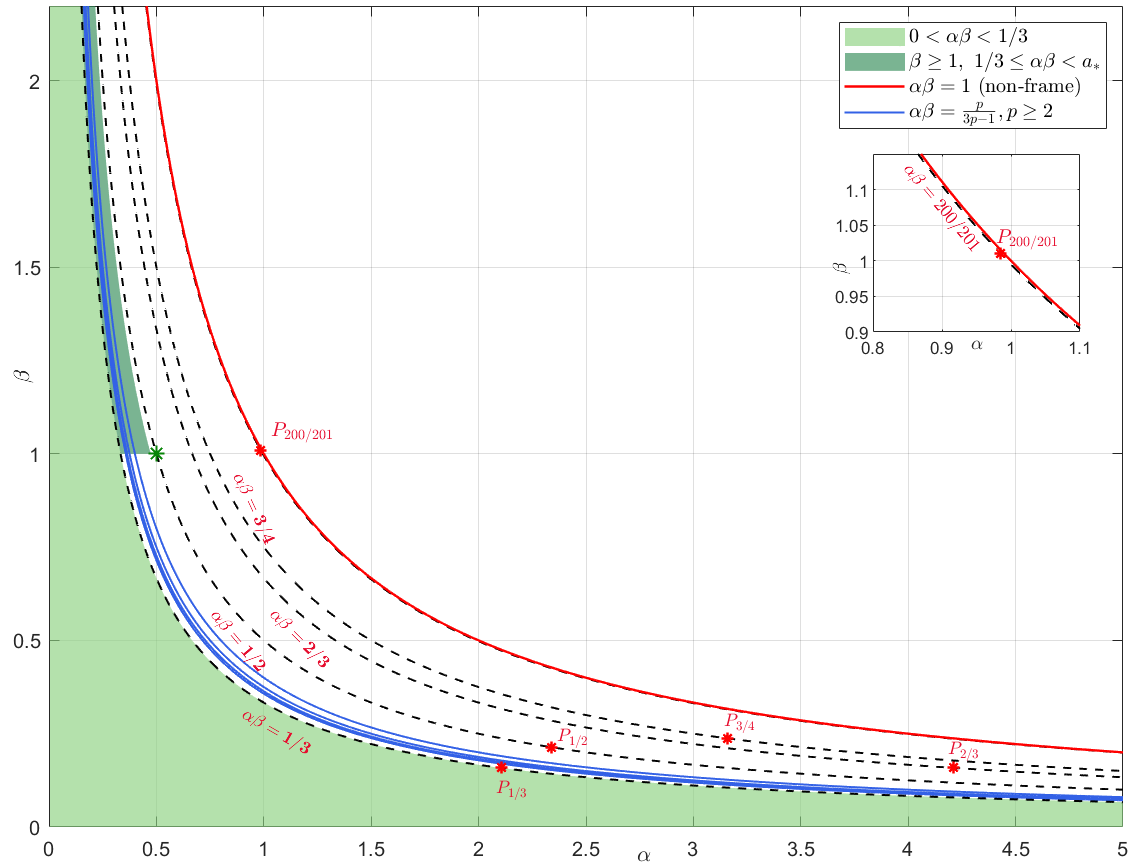}
    \caption{The light-green region corresponds to $0<\alpha\beta<1/3$, while the darker region indicates the additional frame region $\beta\ge1$ and $1/3\le\alpha\beta<a^\ast$, where $a^\ast\approx0.47373$ together with irrational densities $0<\alpha\beta<1$ in \cite{Yurii:rational:2023}. The blue hyperbolas $\alpha\beta=\frac{p}{3p-1}$, for $p\ge2$
are universal frame curves. The red marked points $P_{1/3},P_{1/2},P_{2/3},P_{3/4}$ are explicit non-frame obstruction points. The point \(P_{200/201}\) is a numerically detected obstruction point on the hyperbola $\alpha\beta=\frac{200}{201}$. It shows that the corresponding hyperbolas are not universal frame curves. Further, the star point $(1/2,1)$ forms a frame.}
    \label{fig:Figure1}
\end{figure}

\section{Algebraic and Structural Preliminaries}\label{section_2}
The partial-fraction decomposition of $g$ plays a central role in our analysis.
Let
\begin{align}\label{defn w_k}
    \omega_k=k,\qtq{and} \omega_{3+k}=-\omega_k,\qtq{for} 1\le k\le 3.
\end{align}
We can rewrite \eqref{window} as
\begin{align}\label{rational}
g(x)=\sum_{k=1}^{6}\frac{b_k}{x-i\omega_k},
\end{align}
where,
\begin{align}\label{defn a_k}
b_1=b_3=\frac{i}{24},~ b_2=-\frac{i}{12},~b_4=b_6=-b_1\text{ and } b_5=-b_2.
\end{align}
Here, 
$\sum_{k=1}^{6} b_k=0$ with $\sum_{k=1}^{6}\omega_k=0$.
Let $\tau=2\pi\beta$ throughout this paper.
Define 
\begin{align}\label{Defn Aks}
  \mathrm A_{k, s}=
    (- 1)^s \ds\sum\limits_{\substack{1\le j_1 < \cdots < j_s\le 6\\ j_l \ne k}} e^{\tau (\omega_{j_1} + \cdots + \omega_{j_s})}.
\end{align}
For $0\le s\le 5$, we consider the function
\begin{align}\label{E : def m_S}
m_{s,\beta}(x) = \sum_{k = 1}^{6} \widetilde b_{k}\, \mathrm A_{k, s} \, e^{\tau x\omega_k }, \quad x \in \R,
\end{align}
where $\widetilde{b}_k=-12 i b_k$ and $b_k$s are defined in \eqref{defn a_k} for $1\le k\le 6$. Using \eqref{defn a_k}, we have
$$ m_{0,\beta}( x) = \sinh(\tau  x)-2\sinh(2\tau  x)+\sinh(3\tau  x)= 4\sinh(2\tau  x) \sinh^2\!\left(\frac{\tau  x}{2}\right)>0, \quad  x>0.$$
Since $m_{0,\beta}(x)\ne 0$ for $x>0$ and $\omega_k\ne \omega_l$, then the Gabor system $\mathcal{G}(g, \alpha,\beta)$ is a frame for $L^2(\R)$ whenever $\alpha\beta<1$ is irrational (see \cite{Yurii:rational:2023}). Furthermore, it is established in \cite[Theorem 1.9]{Yurii:rational:2023} that $\left\{(\alpha,\beta):~\alpha\beta\le \frac{1}{6}\right\}\subset \mathcal{F}(g)$. In this paper, therefore, we focus primarily on rational lattices for $0<\alpha\beta<1$.

Let $\widetilde{\mathrm A}_{s}=\sum\limits_{1\le j_1<\cdots<j_s\le 6} e^{\tau (\omega_{j_1}+\cdots+\omega_{j_s})}$ with $\widetilde{\mathrm A}_{0}=1$. It is easy to check that
\begin{align}\label{Aks and tilde As}
 \mathrm A_{ks} = e^{\tau \omega_k}\, \mathrm A_{k,s-1} + (-1)^s \widetilde{\mathrm A}_{s}.
\end{align}
Using \eqref{Aks and tilde As}, we obtain
\begin{align}
    m_{1,\beta}(x)&=m_{0,\beta}(x+1)-\widetilde{\mathrm{A}}_1 m_{0,\beta}(x),\nonumber\\ 
    m_{2,\beta}(x)&= m_{0,\beta}(x+2)-\widetilde{\mathrm{A}}_1 m_{0,\beta}(x+1)+\widetilde{\mathrm{A}}_2m_{0,\beta}(x),\nonumber
\end{align}
where
\begin{align}\label{A1_eqn}
\widetilde {\mathrm{A}}_1 = 2(\cosh\tau+\cosh2\tau+\cosh3\tau),
\end{align}
and
\begin{align}\label{A2_eqn}
\widetilde {\mathrm{A}}_2= 3+4\cosh\tau +2\cosh2\tau+2\cosh3\tau +2\cosh4\tau+2\cosh5\tau.
 \end{align}
The polynomial
\begin{align}\label{P_Ndefinition}
\mathcal{P}_\beta(z; x)=\sum_{l=0}^{5}m_{l,\beta}( x)z^l=\mathcal R_\beta(z)\, \mathcal Q_\beta(z\,; x),
\end{align}
where
$$\mathcal R_\beta(z)=\prod_{k=1}^{6} (1 - z e^{\tau\omega_k}),\quad\mathcal Q_\beta(z\,; x)=\sum_{k=1}^{6} \frac{\widetilde b_{k}\,e^{\tau x \omega_k}}{1 - z e^{\tau \omega_k}},$$
and $\omega_k$ are defined in \eqref{defn w_k}.
Setting $\gamma_k=e^{\tau k}$ yields $\gamma_{3+k}=\gamma_k^{-1}$ allowing
\begin{align}\label{R_beta}
    \mathcal R_\beta(z)=\prod_{k=1}^{3}\left(1-z\gamma_k\right)\left(1-z\gamma_k^{-1}\right),
\end{align}
    which explicitly satisfies the self-reciprocal identity
    \begin{align}\label{reciprocal_R}
        z^{6} \mathcal R_\beta(1/z)=\prod_{k=1}^{3}\left(z-\gamma_k\right)\left(z-\gamma_k^{-1}\right)=\mathcal R_\beta(z).
    \end{align}

To analyze the rank of the Zibulski-Zeevi matrix, we need to understand the structure of its generating polynomial, $\mathcal{P}_\beta(z; x)$. The following lemma establishes a fundamental reciprocal symmetry connecting the polynomial evaluated at $x$ with its evaluation at $1-x$.

\begin{lem} \label{poly_reciprocity}
For $\beta>0$, $ x \in [0,1]$ and $z\ne 0$,
\begin{equation}\label{eq:poly_reciprocity}
z^{5}\mathcal{P}_\beta\left(\frac{1}{z}; x\right) = \mathcal{P}_\beta(z;1- x).
\end{equation}
\end{lem}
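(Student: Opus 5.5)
The plan is to work entirely with the factorization \eqref{P_Ndefinition}, $\mathcal P_\beta(z;x)=\mathcal R_\beta(z)\,\mathcal Q_\beta(z;x)$. We split $z^5$ as $z^6\cdot z^{-1}$, let $z^6$ act on $\mathcal R_\beta$ and $z^{-1}$ act on $\mathcal Q_\beta$, and use the self-reciprocity \eqref{reciprocal_R} for the first factor. For the second factor we use the antisymmetry of the coefficients $\widetilde b_k$ under $\omega_k\mapsto-\omega_k$. We first prove the identity as an equality of rational functions on the set of $z\neq 0$ that are not poles of $\mathcal Q_\beta$. Both sides are polynomials in $z$ (respectively in $1/z$, after multiplying by $z^5$), so the identity then extends to every $z\neq0$ by continuity.

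\emph{Step 1: the factor $\mathcal R_\beta$.} Write
\[
z^{5}\mathcal P_\beta(1/z;x)=\bigl(z^{6}\mathcal R_\beta(1/z)\bigr)\, z^{-1}\mathcal Q_\beta(1/z;x)=\mathcal R_\beta(z)\, z^{-1}\mathcal Q_\beta(1/z;x),
\]
where the second equality is \eqref{reciprocal_R}. It therefore suffices to show that $z^{-1}\mathcal Q_\beta(1/z;x)=\mathcal Q_\beta(z;1-x)$.

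\emph{Step 2: the factor $\mathcal Q_\beta$.} For each $k$ we compute
\[
\frac{z^{-1}}{1-z^{-1}e^{\tau\omega_k}}=\frac{1}{z-e^{\tau\omega_k}}=-\frac{e^{-\tau\omega_k}}{1-ze^{-\tau\omega_k}}.
\]
Substituting this into $\mathcal Q_\beta$ gives
\[
z^{-1}\mathcal Q_\beta(1/z;x)=-\sum_{k=1}^{6}\frac{\widetilde b_k\, e^{\tau(x-1)\omega_k}}{1-ze^{-\tau\omega_k}}.
\]
Next we reindex by the involution $k\mapsto k'$, which exchanges $k$ and $3+k$, so that $\omega_{k'}=-\omega_k$ by \eqref{defn w_k}. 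From \eqref{defn a_k} we have $b_{3+k}=-b_k$, and hence $\widetilde b_{k'}=-\widetilde b_k$. The sum therefore becomes
\[
\sum_{k'=1}^{6}\frac{\widetilde b_{k'}\,e^{\tau(1-x)\omega_{k'}}}{1-ze^{\tau\omega_{k'}}}=\mathcal Q_\beta(z;1-x).
\]
This is the required identity.

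\emph{Step 3: extension to all $z\neq0$.} The identity holds away from the finitely many points $z=e^{\pm\tau\omega_k}$. Both sides of \eqref{eq:poly_reciprocity} are continuous in $z\neq 0$: the left side is a Laurent polynomial in $z$ and the right side is a polynomial in $z$. Hence the identity holds for all $z\neq0$.

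There is no serious obstacle. The only point needing care is the sign bookkeeping in Step 2: two minus signs appear, one from rewriting $1/(z-e^{\tau\omega_k})$ and one from $\widetilde b_{k'}=-\widetilde b_k$, and they must cancel. Also, the factor $e^{-\tau\omega_k}$ must be absorbed into the exponent, which turns $x$ into $x-1$ and then into $1-x$ after the reflection. As a cross-check, one can compare the coefficient of $z^0$ on both sides. This reduces to the identity $m_{5,\beta}(x)=m_{0,\beta}(1-x)$, which can be verified directly from \eqref{E : def m_S} using $\mathrm A_{k,5}=-e^{-\tau\omega_k}$; this value holds because $\sum_j\omega_j=0$.
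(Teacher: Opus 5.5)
Your proposal is correct and follows the paper's proof. Both factor $\mathcal P_\beta=\mathcal R_\beta\mathcal Q_\beta$, use the self-reciprocity $z^6\mathcal R_\beta(1/z)=\mathcal R_\beta(z)$, and derive $\mathcal Q_\beta(1/z;x)=z\,\mathcal Q_\beta(z;1-x)$ from $\omega_{3+k}=-\omega_k$ and $\widetilde b_{3+k}=-\widetilde b_k$; your reindexing via the involution $k\leftrightarrow 3+k$ (instead of splitting the sum into $k\le 3$ and $k\ge 4$) and your continuity argument at $z=e^{\pm\tau\omega_k}$ are cosmetic refinements, the latter making explicit a step the paper leaves implicit.
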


\begin{pf}
Let $\gamma_{k}=e^{\tau\omega_k}$, $\tau=2\pi\beta$. Now
\begin{align}
\mathcal Q_\beta\left(\frac{1}{z}; x\right) &= \sum_{k=1}^{6} \frac{\widetilde b_k e^{\tau x\omega_k}}{1 - \frac{1}{z}\gamma_k} = \sum_{k=1}^{6} \frac{z \widetilde b_k e^{\tau x\omega_k}}{z - \gamma_k}\nonumber\\
&= \sum_{k=1}^{3} \frac{z \widetilde b_k e^{\tau x\omega_k}}{z - \gamma_k} - \sum_{k=1}^{3} \frac{z \widetilde b_k e^{-\tau x \omega_k}}{z - \gamma_k^{-1}}\nonumber\\
&= \sum_{k=1}^{3} \frac{z \widetilde b_k \gamma_k^{-1} e^{\tau x\omega_k}}{z\gamma_k^{-1} - 1} - \sum_{k=1}^{3} \frac{z \widetilde b_k \gamma_k e^{-\tau x \omega_k}}{z\gamma_k - 1}\nonumber\\
&= -\sum_{k=1}^{3} \frac{z \widetilde b_k e^{-(1- x)\tau\omega_k}}{1 - z\gamma_k^{-1}} + \sum_{k=1}^{3} \frac{z \widetilde b_k e^{(1- x)\tau\omega_k}}{1 - z\gamma_k}.\nonumber
\end{align}
By definition, 
$$\mathcal Q_\beta(z; 1- x) = \sum_{k=1}^{3} \frac{\widetilde b_k e^{(1- x)\tau\omega_k}}{1 - z\gamma_k} -\sum_{k=1}^{3} \frac{ \widetilde b_k e^{-(1- x)\tau\omega_k}}{1 - z\gamma_k^{-1}}.$$
Therefore, 
\begin{align}\label{shift_reciprocal_Q}
    \mathcal Q_\beta\left(\frac{1}{z}; x\right) =  z \mathcal Q_\beta(z; 1- x).
\end{align}
Combining \eqref{reciprocal_R} and \eqref{shift_reciprocal_Q}, we obtain
\begin{align*}
\mathcal{P}_\beta\left(\frac{1}{z}; x\right) = \mathcal R_\beta\left(\frac{1}{z}\right)\mathcal Q_\beta\left(\frac{1}{z}; x\right)& = \left(z^{-6}\mathcal R_\beta(z)\right) \left(z \mathcal Q_\beta(z;1- x)\right)\\
&= z^{-5}\mathcal{P}_\beta (z;1- x).
\end{align*}
Multiplying through by $z^{5}$ completes the proof.
\end{pf}

Hence,
\begin{align}\label{reflectio_on_m}
    m_{5,\beta}( x)=m_{0,\beta}(1- x),\quad m_{4,\beta}( x)=m_{1,\beta}(1- x),\qtq{and} m_{3,\beta}( x)=m_{2,\beta}(1- x).
\end{align}
For every $\beta>0$,
$m_{0,\beta}( x)>0$ for $ x\in (0,1]$. 

\begin{rem}\label{Self_reciprocal_P}
For $\beta>0$, $\mathcal{P}_\beta(z;1/2)$ is a self-reciprocal polynomial of degree $5$. 
\end{rem}

 Let $R=e^\tau>1$ and $Y=e^{\tau x}>1$. Then $\sinh(n \tau x) = (Y^n-Y^{-n})/2$ and $\cosh(n \tau) = (R^n+R^{-n})/2$. Therefore,
\begin{align}
    m_{0,\beta}(x)&= \frac{Y^3-Y^{-3}}2 -\left(Y^2-Y^{-2}\right) +\frac{Y-Y^{-1}}2\nonumber\\
    &=\frac{(Y-1)^3(Y+1)(Y^2+1)} {2Y^3}.\nonumber
\end{align}
$\widetilde{\mathrm{A}}_1$ and $\widetilde{\mathrm{A}}_2$ in \eqref{A1_eqn} and \eqref{A2_eqn}, can be written as $\widetilde{\mathrm{A}}_1 = \sum_{k=1}^3 (R^k + R^{-k})$
and
$$R^6 \widetilde{\mathrm{A}}_2 = R^{11}+R^{10}+R^9+R^8+2R^7+3R^6+2R^5+R^4+R^3+R^2+R$$ 
respectively. Let
\begin{align}\label{P1_Y_eqn}
    P_1(Y)=(Y-1)^3(Y+1)(Y^2+1),
\end{align}
$C_1(R)=R^3\, \widetilde{\mathrm{A}}_1$, and $C_2(R)=R^6\, \widetilde{\mathrm{A}}_2$. Then
\begin{align}\label{new_m1_m2}
    m_{1,\beta}(x) = \frac{P_1(RY)-C_1(R)P_1(Y)}{2R^3Y^3}\text{ and }m_{2,\beta}(x) = \frac{P_1(R^2Y)-C_1(R)P_1(RY)+C_2(R)P_1(Y)}{2R^6Y^3}.
\end{align}
From \eqref{new_m1_m2}, we have
\begin{align}\label{m1_m2_at_0}
     m_{1,\beta}(0) = \frac{(R-1)^3(R+1)(R^2+1)}{2R^3}>0, \text{ and } m_{2,\beta}(0) = \frac{ (R-1)^3(R+1)(R^2+1)(R^4+1) }{ 2R^5 }>0
\end{align}
whereas
\begin{align}\label{m1_at1}
m_{1,\beta}(1)=\frac{ (R-1)^3(R+1)(R^2+1)(R^4+1) }{ 2R^5 }>0,
\end{align}
and
\begin{align}\label{m2_at1}
    m_{2,\beta}(1) = - \frac{ (R-1)^3(R+1)(R^2+1)(R^2+R+1) }{ R^4 }<0.
\end{align}

To analyze the rank of the Zibulski-Zeevi matrix, we need precise information on
the signs and zeros of the coefficient functions $m_{1,\beta}$ and
$m_{2,\beta}$. We obtain this through their Wronskians, which will show that
$(m_{0,\beta},m_{1,\beta},m_{2,\beta})$ forms an extended complete Chebyshev system on $(0,1)$.

\begin{lem}\label{Wronskian_Lemma}
    For every $\beta>0$ and $0<x<1$, Wronskian of
$$W(m_{0,\beta},m_{1,\beta})(x)<0, \text{ and }W(m_{0,\beta},m_{1,\beta},m_{2,\beta})(x)<0.$$
\end{lem}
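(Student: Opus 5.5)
The plan is to remove the constants $\widetilde{\mathrm A}_1,\widetilde{\mathrm A}_2$ from the Wronskians, which reduces both claims to Wronskians of the shifted copies $m_{0,\beta}(x),\,m_{0,\beta}(x+1),\,m_{0,\beta}(x+2)$. I would then write everything as explicit rational functions in $R=e^{\tau}$ and $Y=e^{\tau x}$ and prove the sign by a polynomial positivity argument.

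\emph{Step 1 (reduction to shifts).} From the identities following \eqref{Aks and tilde As}, $m_{1,\beta}=m_{0,\beta}(\cdot+1)-\widetilde{\mathrm A}_1m_{0,\beta}$. Also $m_{2,\beta}=m_{0,\beta}(\cdot+2)-\widetilde{\mathrm A}_1m_{0,\beta}(\cdot+1)+\widetilde{\mathrm A}_2m_{0,\beta}$. Since $\widetilde{\mathrm A}_1,\widetilde{\mathrm A}_2$ do not depend on $x$, multilinearity of the Wronskian in its columns gives
\[
W(m_{0,\beta},m_{1,\beta})(x)=W\bigl(m_{0,\beta}(x),m_{0,\beta}(x+1)\bigr),
\]
\[
W(m_{0,\beta},m_{1,\beta},m_{2,\beta})(x)=W\bigl(m_{0,\beta}(x),m_{0,\beta}(x+1),m_{0,\beta}(x+2)\bigr).
\]

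\emph{Step 2 (change of variables).} With $Y=e^{\tau x}$ we have $\frac{d}{dx}=\tau Y\frac{d}{dY}$, and $m_{0,\beta}(x+j)=P_1(R^jY)/(2R^{3j}Y^3)$ by \eqref{P1_Y_eqn}. The Wronskian in $x$ of $n$ functions equals $\tau^{n(n-1)/2}Y^{n(n-1)/2}$ times the Wronskian in $Y$, which does not change the sign. The common factors $1/(2R^{3j})$ are positive. The factor $Y^{-3}$ shared by all columns pulls out as $Y^{-3n}$, since $W(hf_1,\dots,hf_n)=h^nW(f_1,\dots,f_n)$. Hence it suffices to determine the sign, for $Y\in(1,R)$, $R>1$, of
\[
D_2(Y,R)=W_Y\bigl(P_1(Y),P_1(RY)\bigr),\qquad D_3(Y,R)=W_Y\bigl(P_1(Y),P_1(RY),P_1(R^2Y)\bigr).
\]
Each is an explicit polynomial in $Y$ and $R$. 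For $D_2$ I expect a manageable computation. Writing $D_2=P_1(Y)^2\,\frac{d}{dY}\bigl[P_1(RY)/P_1(Y)\bigr]$, the sign is that of the derivative of the ratio. Using $P_1(Y)=(Y-1)^3(Y+1)(Y^2+1)$, this reduces to comparing logarithmic derivatives:
\[
\frac{d}{dY}\log P_1(RY)-\frac{d}{dY}\log P_1(Y)=\frac{3R}{RY-1}-\frac{3}{Y-1}+\frac{R}{RY+1}-\frac{1}{Y+1}+\frac{2R^2Y}{R^2Y^2+1}-\frac{2Y}{Y^2+1}.
\]
Each term $t\mapsto t\,\phi'(t)$ with $\phi=\log(t-1)$, $\log(t+1)$, or $\log(t^2+1)$ can be checked for monotonicity in $t>1$. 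Multiplying by $Y$, the expression equals $u(RY)-u(Y)$, where
\[
u(t)=\frac{3t}{t-1}+\frac{t}{t+1}+\frac{2t^2}{t^2+1}.
\]
The term $3t/(t-1)$ is strictly decreasing on $(1,\infty)$, while the other two terms increase. I would verify $u'(t)<0$ for $t>1$ directly: clearing denominators gives a one-variable polynomial inequality. This yields $W(m_{0,\beta},m_{1,\beta})<0$.

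\emph{Step 3 (the $3\times3$ case).} This is the main obstacle. I would use the same structure through the identity
\[
W(f_1,f_2,f_3)=f_1^3\,W\bigl((f_2/f_1)',(f_3/f_1)'\bigr),
\]
which reduces $D_3$ to a $2\times2$ Wronskian of the derivatives of the ratios $P_1(RY)/P_1(Y)$ and $P_1(R^2Y)/P_1(Y)$. If this does not produce a clean sign, I would fall back on brute force. That means computing $D_3$ symbolically and stripping the obvious factors, which are powers of $(R-1)$, of $R$, and of $(RY-1)$ or $(Y-1)$ coming from the triple zero of $P_1$ at $1$. I would then substitute $R=1+r$, $Y=1+y$ with $r>0$ and $0<y<r$ (the constraint $Y<R$ from $x<1$) and show that the remaining polynomial has coefficients of a single sign. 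If the constraint $y<r$ is genuinely needed, I would write $y=r s/(1+s)$ with $s>0$ so that the domain becomes an orthant again. I expect this positivity verification to be the hard computational core. The overall sign should be confirmed against the boundary values \eqref{m1_m2_at_0}, \eqref{m1_at1}, \eqref{m2_at1}: for instance, $m_{2,\beta}$ changes sign on $(0,1)$, which is consistent with a Chebyshev system of the asserted orientation.
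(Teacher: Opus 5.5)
Your proposal is correct. It differs from the paper on the first inequality and coincides with it on the second.

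\textbf{First inequality.} For $W(m_{0,\beta},m_{1,\beta})$ the paper expands symbolically to $-\tau(R-1)(Y-1)^2(RY-1)^2P_2(R,Y)/(2R^3Y^5)$ and observes that $P_2$ has positive coefficients. You instead remove $\widetilde{\mathrm A}_1$ by multilinearity and reduce the claim to strict log-concavity of $m_{0,\beta}$, i.e.\ to $u(RY)<u(Y)$. This is a more conceptual, hand-checkable argument, and putting $u(RY)-u(Y)$ over a common denominator gives back exactly the paper's $P_2$. It can be shortened further. From $m_{0,\beta}(x)=4\sinh(2\tau x)\sinh^2(\tau x/2)$ one has $(\log m_{0,\beta})'(x)=2\tau\coth(2\tau x)+\tau\coth(\tau x/2)$, which is visibly decreasing, so the polynomial inequality for $u'$ is unnecessary. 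Equivalently, writing $P_1(t)=(t-1)^2(t^4-1)$ gives $u(t)=6+\frac{2}{t-1}+\frac{4}{t^4-1}$, in which every term decreases.

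\textbf{Second inequality.} For the $3\times3$ Wronskian the log-concavity idea gives nothing directly. Your fallback is exactly the paper's proof, so that half is as complete as the paper's computer-algebra check and no more. The paper's computation tells you what to expect. After stripping $(R-1)^3(R+1)(Y-1)(RY-1)(RY+1)(R^2Y-1)$, the cofactor $P_3(1+r,1+y)$ has positive coefficients on the whole quadrant $r,y>0$. So neither the constraint $y<r$ nor the substitution $y=rs/(1+s)$ is needed. Your shift reduction, which the paper does not make explicit, removes $C_1(R)$ and $C_2(R)$ and so shrinks the polynomial that must be expanded.

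\textbf{Your sanity check.} The sign pattern of $m_{2,\beta}$ cannot detect the orientation of the Chebyshev system. Take $m_0=1$ and $m_1=-x$. Both $m_2=x^2-2x+\frac12$ and $m_2=\frac12-x^2$ are positive at $0$, negative at $1$, and have one zero in $(0,1)$. Yet $W(m_0,m_1,m_2)=-m_2''$ equals $-2$ and $+2$ respectively. A genuine check is the limit $\tau\to0$. There $m_{0,\beta}(x)\sim2\tau^3x^3$, and the leading term is governed by $W\bigl(x^3,(x+1)^3,(x+2)^3\bigr)=-36x(x+1)(x+2)<0$.
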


\begin{proof}
 All Wronskians are evaluated algebraically as rational functions of $R=e^\tau>1$ and $Y=e^{\tau x}>1$. By definition,
\begin{align}\label{Wronskian_of_two}
    W(m_{0,\beta},m_{1,\beta})(x)&= m_{0,\beta}(x)m_{1,\beta}'(x)-m_{0,\beta}'(x)m_{1,\beta}(x)\nonumber\\
    &= -\tau \frac{ (R-1)(Y-1)^2(RY-1)^2 }{ 2R^3Y^5 } P_2(R,Y), 
\end{align}
where
$$P_2(R,Y) =R^3Y^6+R^2(R+1)Y^5+R(R^2+R+1)Y^4+3(R+1)(R^2+1)Y^3+(R^2+R+1)Y^2+(R+1)Y+1.$$
Since $\tau>0$, $R>1$, $Y>1$, and $RY>1$, we have
$P_2(R,Y)>0$ and every factor in \ref{Wronskian_of_two} is strictly positive. Therefore, $W(m_{0,\beta},m_{1,\beta})(x)<0$, for $0<x<1$.

Now,
\begin{align*}
    W(m_0,m_1,m_2) &= \tau^3 Y^3 \det \begin{pmatrix} m_0 & m_1 & m_2 \\ \partial_Y m_0 & \partial_Y m_1 & \partial_Y m_2 \\ \partial_Y^2 m_0 & \partial_Y^2 m_1 & \partial_Y^2 m_2 \end{pmatrix}\\
&= -\tau^3 \frac{(R-1)^3(R+1)(Y-1)(RY-1)(RY+1)(R^2Y-1)}{2R^8Y^6} P_3(R,Y),
\end{align*}
where $P_3(R,Y)$ is a real polynomial. Upon setting $R=1+r$ and $Y=1+y$, the polynomial $P_3(R,Y)$ admits an expansion
$$ P_3(1+r,1+y)=\sum_{i,j=0}^8c_{ij}r^iy^j $$
with $c_{ij}>0$. Therefore, $P_3(R, Y)>0$ for $R, Y>1$, and hence the Wronskian is strictly negative. An exact symbolic expansion is reproduced by the \href{https://github.com/Riya74012/Gabor-even-rational-windows-symbolic-verification}{MATLAB} code for verification.
\end{proof}

\begin{rem}\label{m1/m2_decreasing}
 From Lemma \ref{Wronskian_Lemma}, we have
    $$\left(\frac{m_{1,\beta}}{m_{0,\beta}}\right)'=\frac{W(m_{0,\beta},m_{1,\beta}}{m^2_{0,\beta}}<0.$$
    Hence, $\frac{m_1}{m_0}$ is a decreasing function in $[0,1]$. Since $m_{1,\beta}(1)>0$, for $0<x\le 1$, 
    $$\frac{m_{1,\beta}(x)}{m_{0,\beta}(x)}\ge \frac{m_{1,\beta}(1)}{m_{0,\beta}(1)}>0,$$
    and therefore, $m_{1,\beta}(x)>0$.
\end{rem}

Using the ECT property established above, the following Lemma discusses the precise sign behavior of $m_{2,\beta}(x)$, proving it possesses exactly one simple zero.

\begin{lem}\label{unique_Zero_m2}
    $m_{2,\beta}$ has a unique simple zero $\theta_\beta\in(0,1),$ with
$m_{2,\beta}(x)>0$ for $0\le x<\theta_\beta$ and $m_{2,\beta}(x)<0$ for $\theta_\beta<x\le 1$.
\end{lem}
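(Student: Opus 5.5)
Since $m_{2,\beta}$ is continuous on $[0,1]$ with $m_{2,\beta}(0)>0$ by \eqref{m1_m2_at_0} and $m_{2,\beta}(1)<0$ by \eqref{m2_at1}, the intermediate value theorem gives at least one zero in $(0,1)$. The real work is to show that this zero is unique and simple. For that I would use the Wronskian information in Lemma \ref{Wronskian_Lemma}, through the standard ratio trick for Chebyshev systems.

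Set $u=m_{1,\beta}/m_{0,\beta}$ and $v=m_{2,\beta}/m_{0,\beta}$ on $(0,1]$, where $m_{0,\beta}>0$. By the classical identity $W(f_0,f_1,f_2)=f_0^3\,W(f_1/f_0,f_2/f_0)$, we get
$$W(m_{0,\beta},m_{1,\beta},m_{2,\beta})=m_{0,\beta}^3\,(u'v''-u''v').$$
Since $u'=W(m_{0,\beta},m_{1,\beta})/m_{0,\beta}^2<0$ (Remark \ref{m1/m2_decreasing}), the quotient rule gives
$$\left(\frac{v'}{u'}\right)'=\frac{u'v''-u''v'}{(u')^2}=\frac{W(m_{0,\beta},m_{1,\beta},m_{2,\beta})}{m_{0,\beta}^3(u')^2}<0.$$
So $\phi:=v'/u'$ is strictly decreasing on $(0,1)$. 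Consequently $v'=\phi\,u'$ changes sign at most once, since $u'$ has fixed sign. Hence $v$ is either strictly monotone on $(0,1)$, or strictly increasing then strictly decreasing, or the reverse.

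Next I would pin down which shape occurs using the boundary data. The sign of $m_{2,\beta}$ agrees with that of $v$ on $(0,1]$. Near $x=0$, $m_{0,\beta}(x)\to0^+$ while $m_{2,\beta}(0)>0$, so $v\to+\infty$ as $x\to0^+$. Also $v(1)<0$. I claim $v$ vanishes exactly once. If $v$ is monotone, it must be decreasing (it goes from $+\infty$ to a negative value), so there is one zero. If $v$ is unimodal with an interior maximum, then it increases first, which is impossible since it starts at $+\infty$. If $v$ decreases and then increases, its minimum value is $\le v(1)<0$, the increasing part stays below $v(1)<0$, and so again there is exactly one zero, lying on the decreasing branch.

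In every admissible case the zero $\theta_\beta$ sits where $v$ is strictly decreasing. To get simplicity I must rule out $v'(\theta_\beta)=0$. The point where $v'=0$ is the turning point, and that is a minimum with negative value, so it is not $\theta_\beta$. Therefore $v'(\theta_\beta)<0$, and then
$$m_{2,\beta}'(\theta_\beta)=m_{0,\beta}(\theta_\beta)\,v'(\theta_\beta)<0,$$
which shows the zero is simple. The sign pattern, $m_{2,\beta}>0$ on $[0,\theta_\beta)$ and $m_{2,\beta}<0$ on $(\theta_\beta,1]$, follows immediately, with the value at $0$ covered by \eqref{m1_m2_at_0}.

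The main obstacle is the endpoint $x=0$, where $m_{0,\beta}$ vanishes (to third order) and the ratios blow up. I would avoid any difficulty there by working only on $(0,1]$ and using the limit $v\to+\infty$, rather than needing the Wronskians to be nonvanishing at $0$.
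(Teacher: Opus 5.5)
Your proof is correct, but it argues differently from the paper.

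The paper cites the Wronskian characterization of ECT systems from Karlin--Studden. From it, every nontrivial combination of $m_{0,\beta},m_{1,\beta},m_{2,\beta}$ has at most two zeros in $(0,1)$ counted with multiplicity. It then does a parity count. The sign change between $m_{2,\beta}(0)>0$ and $m_{2,\beta}(1)<0$ forces a zero of odd, hence simple, multiplicity. A second distinct zero would add a second sign change and contradict $m_{2,\beta}(1)<0$.

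You instead carry out by hand the division--differentiation step that underlies that theorem. From $W(m_{0,\beta},m_{1,\beta},m_{2,\beta})=m_{0,\beta}^3\,W(u',v')$ you get that $v'/u'$ is strictly decreasing, so $v=m_{2,\beta}/m_{0,\beta}$ has at most one turning point. The blow-up $v\to+\infty$ at $0^+$ together with $v(1)<0$ then fixes the shape and gives $m_{2,\beta}'(\theta_\beta)=m_{0,\beta}(\theta_\beta)\,v'(\theta_\beta)<0$ directly. The increasing-then-decreasing case, which you exclude via the boundary behaviour, is in fact already excluded by signs: $u'<0$ and $v'/u'$ decreases, so $v'$ can only pass from negative to positive.

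What your route buys:
\begin{itemize}
\item It is self-contained rather than relying on the ECT theorem.
\item It handles explicitly the degenerate endpoint $x=0$, where $m_{0,\beta}$ vanishes.
\item It gives the extra information that $m_{2,\beta}/m_{0,\beta}$ is strictly decreasing through its zero.
\end{itemize}

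What the paper's route buys is the general zero bound for all nontrivial combinations, which it reuses verbatim for $F_\beta=m_{2,\beta}-m_{1,\beta}$ in Lemma \ref{F_xi_lemma}. Your argument transfers there as well. Replace $v'/u'$ by $v'/u'-1$, which is still strictly decreasing, and use $F_\beta(0)>0$ (so $F_\beta/m_{0,\beta}\to+\infty$ at $0^+$) and $F_\beta(1)<0$.
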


\begin{proof}
From \eqref{m1_m2_at_0} and \eqref{m2_at1}, we have $m_{2,\beta}(0)>0$ and $m_{2,\beta}(1)<0$. Hence, by continuity, $m_{2,\beta}$ has at least one zero in $(0,1)$. On the other hand, from Lemma \eqref{Wronskian_Lemma} we have 
$$ W(m_{0,\beta},m_{1,\beta})(x)<0, \text{ and } W(m_{0,\beta},m_{1,\beta},m_{2,\beta})(x)<0, \quad 0<x<1.$$
Hence,
$$ W(m_{0,\beta},-m_{1,\beta})(x)>0, \qquad W(m_{0,\beta},-m_{1,\beta},m_{2,\beta})(x)>0. $$
Therefore, by the Wronskian characterization of extended complete Chebyshev (ECT) systems \cite[Ch. XI]{KarlinStudden1966}, the system
$$ (m_{0,\beta},-m_{1,\beta},m_{2,\beta}) $$
is an ECT-system on $(0,1)$. Since multiplication of one element by a nonzero constant does not affect the Chebyshev property, $(m_{0,\beta},m_{1,\beta},m_{2,\beta})$ is also an ECT-system. Consequently, every nontrivial linear combination of these three functions has at most two zeros in $(0,1)$, counted with multiplicity. In particular,
$m_{2,\beta}$ itself has at most two zeros counting multiplicities. 

Since
$m_{2,\beta}(0)>0$ and $m_{2,\beta}(1)<0$, it has at least one sign-changing zero $\theta_\beta\in(0,1). $
Such a zero has odd multiplicity. Since the ECT property allows at most two zeros counted with multiplicity, $\theta_\beta$ must be simple.

We claim that it is the only zero. Indeed, if $m_{2,\beta}$ had another distinct zero, then the ECT bound would force this second zero to be simple as well. Hence, both zeros would be sign-changing. Starting with $m_{2,\beta}(0)>0$, two sign changes would imply $ m_{2,\beta}(1)>0$, contrary to $\eqref{m2_at1}$. Therefore $m_{2,\beta}$ has exactly one simple zero $\theta_\beta\in(0,1)$.

Consequently, by continuity,
$m_{2,\beta}(x)>0$ for $0\le x<\theta_\beta$ and $m_{2,\beta}(x)<0$ for $\theta_\beta<x\le 1$.
\end{proof}

\begin{lem}\label{F_xi_lemma}
Let $F_\beta(x)=m_{2,\beta}(x)-m_{1,\beta}(x)$.
\begin{itemize}
    \item[$(i)$] $F_\beta$ has a unique simple zero
$\rho_\beta\in(0,1)$
with $F_\beta(x)>0$ for $0\le x<\rho_\beta$.
    \item[$(ii)$] If $F_\beta(1/3)\le 0$, then $m_{2,\beta}(5/6)<0$.
    \item[$(iii)$] If $F_\beta(1/2)\le0$, then $m_{2,\beta}(3/4)<0$.
\end{itemize}
\end{lem}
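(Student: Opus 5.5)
We handle part $(i)$ exactly as in the proof of Lemma \ref{unique_Zero_m2}, and parts $(ii)$ and $(iii)$ by reducing each to a one-variable polynomial inequality in $R=e^\tau$.

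\emph{Part $(i)$.} $F_\beta=m_{2,\beta}-m_{1,\beta}$ is a nontrivial linear combination of the ECT-system $(m_{0,\beta},m_{1,\beta},m_{2,\beta})$ from the proof of Lemma \ref{unique_Zero_m2}. Hence $F_\beta$ has at most two zeros in $(0,1)$, counted with multiplicity. Its endpoint signs follow from \eqref{m1_m2_at_0}, \eqref{m1_at1} and \eqref{m2_at1}. At $x=0$ we get
$$F_\beta(0)=\frac{(R-1)^3(R+1)(R^2+1)}{2R^3}\Big(\frac{R^4+1}{R^2}-1\Big)>0,$$
since $R^4+1>R^2$. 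At $x=1$ we get $F_\beta(1)=m_{2,\beta}(1)-m_{1,\beta}(1)<0$, because $m_{2,\beta}(1)<0<m_{1,\beta}(1)$. So $F_\beta$ has a sign-changing zero $\rho_\beta$, which has odd multiplicity and is therefore simple. A second distinct zero would also have to be simple, giving two sign changes and forcing $F_\beta(1)>0$, a contradiction. Thus $\rho_\beta$ is unique and $F_\beta>0$ on $[0,\rho_\beta)$.

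A useful remark for later: at $\rho_\beta$ we have $m_{2,\beta}=m_{1,\beta}>0$ by Remark \ref{m1/m2_decreasing}, so $\rho_\beta<\theta_\beta$. This gives only a lower bound on $\theta_\beta$, so it does not settle $(ii)$ or $(iii)$ directly.

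\emph{Parts $(ii)$ and $(iii)$.} By part $(i)$ and Lemma \ref{unique_Zero_m2}, the hypothesis of $(ii)$ is equivalent to $\rho_\beta\le 1/3$, and the conclusion is equivalent to $\theta_\beta<5/6$. Both statements depend on the single parameter $R>1$. I would therefore make them explicit using \eqref{P1_Y_eqn} and \eqref{new_m1_m2}.

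For $(ii)$, set $S=R^{1/6}>1$. Then $Y=R^{1/3}=S^2$ at $x=1/3$, and $Y=R^{5/6}=S^5$ at $x=5/6$. Both $F_\beta(1/3)$ and $m_{2,\beta}(5/6)$ become, up to positive factors, polynomials $\Phi(S)$ and $\Psi(S)$ with integer coefficients. After removing the obvious positive factors such as $(S-1)^k$ and $(S^j+1)$, I expect:
\begin{itemize}
\item the reduced $\Phi$ to have exactly one root $S_0>1$, with $F_\beta(1/3)\le 0 \iff S\ge S_0$ (large $\beta$ makes $\rho_\beta$ small);
\item the reduced $\Psi$ to have a single root $S_1>1$, with $m_{2,\beta}(5/6)<0 \iff S>S_1$.
\end{itemize}
I would verify the root counts via Descartes' rule of signs or a Sturm sequence. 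The implication then reduces to showing $S_1<S_0$. To certify this, I would pick a rational $q$ with $\Psi(q)<0$ and $\Phi(q)>0$. I would then check that the shifted polynomials $\pm\Phi(q+t)$ and $\pm\Psi(q+t)$ have coefficients of constant sign for $t\ge 0$, in the same spirit as the $c_{ij}>0$ argument in Lemma \ref{Wronskian_Lemma}.

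Part $(iii)$ follows the same route with $S=R^{1/4}$. Here $Y=S^2$ at $x=1/2$ and $Y=S^3$ at $x=3/4$.

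\emph{Main obstacle.} The main obstacle is the explicit comparison of the two threshold values of $R$. Both polynomials have fairly high degree in $S$: degree around $60$ for $(ii)$. Separating their roots rigorously needs a clean interval certificate, ideally the shifted expansions above. I would try the shifted-expansion certificate first and fall back on Sturm sequences if it fails. As with Lemma \ref{Wronskian_Lemma}, this step would be backed by symbolic computation.
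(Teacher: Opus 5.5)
\paragraph{Assessment.} Your part $(i)$ is the paper's argument. For $(ii)$ and $(iii)$, your substitutions ($S=e^{\tau/6}$ with $Y=S^2,S^5$, and $S=e^{\tau/4}$ with $Y=S^2,S^3$), the unique-root count for each reduced polynomial, and separation at a rational point are also what the paper does.

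\paragraph{The predicted orientation is reversed.} In both bullets the direction is backwards, and so is the inequality you say the implication reduces to. For small $\beta$, uniformly on $[0,1]$,
\[
m_{1,\beta}(x)=2\tau^3(1+3x+3x^2-5x^3)+O(\tau^5),\qquad
m_{2,\beta}(x)=2\tau^3(2-6x-12x^2+10x^3)+O(\tau^5).
\]
Hence $F_\beta(1/3)\approx-\tfrac{56}{9}\tau^3<0$ and $m_{2,\beta}(5/6)\approx-\tfrac{599}{54}\tau^3<0$. For large $\beta$ and fixed $x\in[0,1)$, $m_{2,\beta}(x)\sim\tfrac12 e^{\tau(5+x)}$, while $m_{1,\beta}(x)=O(e^{\tau(3+2x)})$. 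So both quantities become positive, and in fact $\rho_\beta\to1$ as $\beta\to\infty$; large $\beta$ does not make $\rho_\beta$ small. This matches the paper's reduced polynomials $A,B$ (your $\Phi,\Psi$): $A(1)=-6$ and $B(1)=-1198$, with positive leading coefficients.

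Therefore $F_\beta(1/3)\le0\iff S\le S_0$ and $m_{2,\beta}(5/6)<0\iff S<S_1$. The implication needs $S_0<S_1$. The inequality $S_1<S_0$ you set out to certify is false: the paper shows $S_0<6/5<S_1$ for $(ii)$ and $S_0<7/5<S_1$ for $(iii)$.

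\paragraph{The certificate.} Your test-point condition, $\Phi(q)>0$ and $\Psi(q)<0$, is the right one, but it certifies $S_0<q<S_1$, not the reverse. Your extra check, that both $\pm\Phi(q+t)$ and $\pm\Psi(q+t)$ have constant-sign coefficients for $t\ge0$, cannot succeed. The polynomial whose root lies to the right of $q$ (here $\Psi$) changes sign on $[q,\infty)$.

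What suffices, and what the paper uses, has two steps:
\begin{enumerate}
\item Apply Descartes' rule to $\Phi(1+t)$ and $\Psi(1+t)$. Each has exactly one sign change, which gives a unique root in $(1,\infty)$ and the sign near $S=1$.
\item Read off the signs at $q$. This yields $F_\beta(1/3)\le0\Rightarrow S\le S_0<q<S_1\Rightarrow m_{2,\beta}(5/6)<0$, and likewise for $(iii)$.
\end{enumerate}
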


\begin{proof}
    From \eqref{m1_m2_at_0}, \eqref{m1_at1}, and \eqref{m2_at1}, we have
    $$F_\beta(0) = \frac{(R-1)^3(R+1)(R^2+1)(R^4-R^2+1)}{2R^5} > 0$$
    and 
    $$F_\beta(1) = -\frac{(R-1)^3(R+1)^3(R^2+1)^2}{2R^5} < 0$$
    for $R=e^\tau>1$.\\
    \noindent
    $(i)$ By continuity, $F_\beta$ has at least one zero in $(0,1)$. By Lemma \ref{Wronskian_Lemma}, $(m_{0,\beta}, m_{1,\beta}, m_{2,\beta})$ forms an ECT system on $(0,1)$. Consequently, any nontrivial linear combination, including $F_\beta = m_{2,\beta} - m_{1,\beta}$, possesses at most two zeros in $(0,1)$ counted with multiplicity. Since $F_\beta$ changes sign from positive to negative, at least one zero $\rho_\beta \in (0,1)$ must have an odd multiplicity. Since the total multiplicity is bounded by $2$, this zero is necessarily simple.
    
    If $F_\beta$ possessed a second distinct zero, the ECT bound would force this second zero to also be simple, creating a second sign change. Starting from $F_\beta(0) > 0$, two sign changes would require $F_\beta(1) > 0$, directly contradicting our boundary evaluation. Thus, $F_\beta$ has exactly one simple zero $\rho_\beta \in (0,1)$. By continuity, $F_\beta(x) > 0$ for $0 \le x < \rho_\beta$ and $F_\beta(x) < 0$ for $\rho_\beta < x \le 1$.\\[0.25cm]
    \noindent
    $(ii)$ Let $s = e^{\tau/6} > 1$. At $x=1/3$, substituting $R = s^6$ and $Y = e^{\tau/3} = s^2$ into $F_\beta(x)$ gives 
    $$F_\beta(1/3) = P_A(s)A(s)$$
    where
    $$P_A(s) = \frac{(s^2-1)(s^{14}-1)(s^{16}-1)}{2s^{32}}> 0$$
    and
    $$A(s) = s^{32}+s^{30}-2s^{28}-3s^{22}+s^{18}-2s^{16}+s^{14}-3s^{10}-2s^4+s^2+1.$$
    Since $P_A(s) > 0$ for $s > 1$, we have $\operatorname{sgn} F_\beta(1/3) = \operatorname{sgn} A(s)$.
    
    Next, consider $m_{2,\beta}(5/6)$. Substituting $R = s^6$ and $Y = e^{5\tau/6} = s^5$ yields
    $$m_{2,\beta}(5/6) = P_B(s)B(s)$$
    where $P_B(s) = \frac{(s-1)^3(s+1)(s^2+1)}{2s^{35}} > 0$. The residual polynomial $B(s)$ is a degree-64 polynomial. The full explicit symbolic expansion and shifted-coefficient verification are provided in the \href{https://github.com/Riya74012/Gabor-even-rational-windows-symbolic-verification}{MATLAB} code. Since $P_B(s) > 0$, we have $\operatorname{sgn} m_{2,\beta}(5/6) = \operatorname{sgn} B(s)$.
    
    We now locate the roots of $A(s)$ and $B(s)$ on $s > 1$ by substituting $s = 1+t$ for $t > 0$. For $A(1+t)$, exact expansion reveals the coefficients of $t^j$ are strictly negative for $0 \le j \le 3$ and strictly positive for $4 \le j \le 32$. This single sign change, combined with $A(1) = -6 < 0$ and $A(s) \to +\infty$, implies by Descartes' rule of signs that $A(s)$ has a unique zero $s_A > 1$. For $B(1+t)$, exact expansion reveals the coefficients of $t^j$ are strictly negative for $0 \le j \le 13$ and strictly positive for $14 \le j \le 64$. With $B(1) = -1198 < 0$, $B(s)$ also possesses a unique zero $s_B > 1$. Evaluating both polynomials exactly at $s = 6/5$ gives
    $$A(6/5)  > 0\text{ and }B(6/5) < 0.$$
    
    Therefore, $1 < s_A < 6/5 < s_B$, establishing the strict ordering $s_A < s_B$. If $F_\beta(1/3) \le 0$, then $A(s) \le 0$, which restricts $s \le s_A$. Since $s_A < s_B$, this forces $s < s_B$, meaning $B(s) < 0$. Consequently, $m_{2,\beta}(5/6) < 0$.\\
    \noindent
    $(iii)$ Let $s = e^{\tau/4} > 1$. At $x=1/2$, substituting $R = s^4$ and $Y = s^2$ gives
    $$F_\beta(1/2) = P_A(s)A(s)$$where $P_A(s) = \frac{(s-1)^3(s+1)^3(s^2+1)(s^4+1)}{2s^{22}} > 0$ and
    $$A(s) = s^{32}-s^{28}-2s^{26}-2s^{24}-4s^{22}-5s^{20}-6s^{18}-5s^{16}-6s^{14}-5s^{12}-4s^{10}-2s^8-2s^6-s^4+1.$$
    At $x=3/4$, substituting $R = s^4$ and $Y = s^3$ gives 
    $$m_{2,\beta}(3/4) = P_B(s)B(s),$$
    where $P_B(s)=\frac{(s-1)^2(s^2+1)(s^{14}-1)}{2s^{23}} > 0$ for $s > 1$, and $B(s)$ is polynomial of degree $28$. An exact symbolic expansion is reproduced by the \href{https://github.com/Riya74012/Gabor-even-rational-windows-symbolic-verification}{MATLAB} code for verification.
    % \begin{align*}
    %     B(s) &= s^{28}-s^{26}+s^{24}-2s^{22}-2s^{21}-s^{20}-2s^{19}-4s^{18}-4s^{17}-3s^{16}-4s^{15}-4s^{14}-4s^{13}-3s^{12}-4s^{11}-4s^{10}-2s^9-s^8-2s^7-2s^6+s^4-s^2+1.
    % \end{align*}
    
    Substituting $s = 1+t$ for $t > 0$, $A(1+t)$ has strictly negative coefficients for $0 \le j \le 7$ and strictly positive for $8 \le j \le 32$. With $A(1) = -43 < 0$, $A$ has a unique zero $s_A > 1$. $B(1+t)$ has strictly negative coefficients for $0 \le j \le 7$ and strictly positive for $8 \le j \le 28$. With $B(1) = -46 < 0$, $B$ has a unique zero $s_B > 1$.Evaluating at the separator $s = 7/5$
    $$A(7/5)> 0\text{ and }B(7/5)< 0.$$
    This establishes the ordering $1 < s_A < 7/5 < s_B$.
    
    Assuming $F_\beta(1/2) \le 0$ forces $A(s) \le 0$, yielding $s \le s_A$. Because $s_A < s_B$, we must have $s < s_B$, which implies $B(s) < 0$. Therefore, $m_{2,\beta}(3/4) < 0$.
    
\end{proof}

The Zak transform of a function $g$ is defined by
\begin{align}\label{Zak}
\mathcal{Z}_g(t, x)=\sum_{m\in\mathbb{Z}}g(t-m)e^{2\pi im x},\quad (t, x)\in \R^2.
\end{align}
\begin{prop}\cite{ghosh2026gabor}\label{P: zak and Q}
If $g$ is defined in \eqref{rational}, then
    $$\mathcal{Z}_{g}(t\,; x)=\frac{\pi}{6}\, e^{2\pi i x t}\,Q(z\,; x),\qtq{for} 0< x<1,$$
    where $z=e^{2\pi it}$ and $Q(z\,; x)$ is defined in \eqref{P_Ndefinition}.
\end{prop}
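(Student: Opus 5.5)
The plan is to compute the Zak transform term by term from the partial-fraction decomposition \eqref{rational}. The Zak transform \eqref{Zak} has unit period, so the statement should be read with the normalization $\tau=2\pi$, i.e.\ $\gamma_k=e^{2\pi\omega_k}$; the general-$\beta$ version follows by rescaling $g$.

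Write $f_k(s)=(s-i\omega_k)^{-1}$. Each $f_k$ is not integrable, but the combination $g=\sum_k b_kf_k$ is. Since $\sum_k b_k=0$ (and in fact $g=O(|s|^{-4})$), the series $\sum_m g(t-m)e^{2\pi imx}$ converges absolutely. I would therefore carry out the computation on $g$ itself and only split into the $f_k$ at the level of Fourier transforms, where everything is classical.

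\textbf{Step 1: Fourier transforms by residues.} For $\omega>0$, closing the contour in the appropriate half-plane gives
\[
\widehat{f}(\xi)=2\pi i\,e^{2\pi\omega\xi}\chi_{(-\infty,0)}(\xi),
\]
and for $\omega<0$,
\[
\widehat{f}(\xi)=-2\pi i\,e^{2\pi\omega\xi}\chi_{(0,\infty)}(\xi).
\]
The value at $\xi=0$ is irrelevant, since we only evaluate at $\xi=n-x$ with $0<x<1$, which is never $0$.

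\textbf{Step 2: Poisson summation.} Apply Poisson summation to $h(s)=g(t-s)e^{2\pi isx}$. This gives
\[
\mathcal Z_g(t,x)=\sum_{n\in\Z}\widehat h(n)=e^{2\pi ixt}\sum_{n\in\Z}\widehat g(n-x)\,e^{-2\pi int}.
\]
Poisson summation is legitimate here because $g\in W(L^\infty,\ell^1)$ and $\widehat g$ is piecewise exponential with exponential decay.

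\textbf{Step 3: Geometric series.} Insert $\widehat g=\sum_k b_k\widehat f_k$ and sum the geometric series in $z=e^{2\pi it}$ (or its conjugate; the sign convention will be fixed by matching with $\mathcal Q$).
\begin{itemize}
\item For $\omega_k>0$, only $n\le 0$ contribute. This yields $e^{-2\pi\omega_kx}\sum_{n\ge0}(ze^{-2\pi\omega_k})^n$.
\item For $\omega_k<0$, only $n\ge1$ contribute. This yields a factor $e^{2\pi\omega_k(1-x)}\cdots$.
\end{itemize}
Each series converges for $|z|=1$ and produces $1/(1-z\gamma_j)$ with $\gamma_j=e^{\pm2\pi\omega_k}$. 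Using the pairing $\omega_{3+k}=-\omega_k$, $b_{3+k}=-b_k$, I would rewrite each term in the form $e^{\tau x\omega_j}/(1-z\gamma_j)$ appearing in $\mathcal Q$. This uses the identity
\[
\frac{w^{-1}}{1-w^{-1}}=-\frac{1}{1-w},\qquad\text{i.e.}\qquad \frac{w^{-1}}{1-w^{-1}}=\frac{1}{w-1},
\]
together with the reflection $x\mapsto 1-x$ already seen in the proof of Lemma \ref{poly_reciprocity}. Finally, $2\pi i\,b_k=2\pi i\cdot\frac{i\widetilde b_k}{12}=-\frac{\pi}{6}\widetilde b_k$, and this produces the constant $\frac{\pi}{6}$ (up to the overall sign fixed by orientation).

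The main obstacle I expect is Step 3: tracking signs, the orientation $z$ versus $\bar z$, and the index shift $n\ge0$ versus $n\ge1$, so that the negative-$\omega$ terms land exactly in the form $\widetilde b_je^{\tau x\omega_j}/(1-z\gamma_j)$ with no leftover term. If a residual term appears, I would check whether it cancels by $\sum_k b_k=0$ or by the antisymmetry $b_{3+k}=-b_k$. Failing that, I would test the formula numerically at a sample point to fix the conventions.
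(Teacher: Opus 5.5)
Your argument is correct. The paper does not reprove this proposition (it is quoted from \cite{ghosh2026gabor}), so your Poisson-summation computation stands as a self-contained proof. The hedges at the end of Step 3 are unnecessary, because nothing is left undetermined.

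There is one small slip. Poisson summation for $h(s)=g(t-s)e^{2\pi i s x}$ gives $\widehat h(n)=e^{2\pi i t(x-n)}\,\widehat g(x-n)$, not $\widehat g(n-x)$. This is harmless because $g$ is even, and your version closes as written. Write $\gamma_k=e^{2\pi\omega_k}$. Your version yields $2\pi i\sum_{k=1}^{6} b_k\, e^{-2\pi\omega_k x}/(1-z\gamma_k^{-1})$. Re-indexing $k\leftrightarrow k\pm3$ with $\omega_{3+k}=-\omega_k$ and $b_{3+k}=-b_k$ turns this into $-2\pi i\sum_{k} b_k\, e^{2\pi\omega_k x}/(1-z\gamma_k)$. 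Since $-2\pi i\, b_k=\frac{\pi}{6}\widetilde b_k$, this equals $\frac{\pi}{6}\,\mathcal Q(z;x)$. So the sign is $+$, the variable is $z$ rather than $\bar z$, and no residual term appears.

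With the correct $\widehat g(x-n)$ it is cleaner still. For $\omega_k>0$ only $n\ge1$ contribute, and for $\omega_k<0$ only $n\le0$. Each $k$ then contributes exactly $\frac{\pi}{6}\widetilde b_k e^{2\pi\omega_k x}/(1-z\gamma_k)$, with no re-indexing. The reflection $x\mapsto1-x$ plays no role in either version.

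If you only want to verify the stated formula rather than derive it, there is a shorter route. It avoids Fourier transforms of the non-integrable pieces $f_k$ and all the sign bookkeeping. For fixed $t$, the map $x\mapsto\mathcal Z_g(t,x)$ is continuous and $1$-periodic with Fourier coefficients $g(t-m)$. On the other side,
\[
\frac{\pi}{6}\int_0^1 e^{2\pi i x t}\,\frac{\widetilde b_k\,e^{2\pi\omega_k x}}{1-z\gamma_k}\,e^{-2\pi i m x}\,dx=\frac{\pi}{6}\cdot\frac{\widetilde b_k}{1-z\gamma_k}\cdot\frac{z\gamma_k-1}{2\pi i\,(t-m-i\omega_k)}=\frac{b_k}{t-m-i\omega_k}.
\]
Summing over $k$ gives $g(t-m)$. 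Both sides therefore have the same Fourier coefficients, and since both are continuous on $(0,1)$, they agree there.

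Your Poisson route buys an actual derivation of the formula; it is the Fourier-dual form of the geometric-series computations the paper later does for $h_\lambda$. The coefficient check buys brevity.
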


\begin{rem}\cite{ghosh2026gabor}
At $ x=0$, $\mathcal Z_g(t,0)=\frac{\pi}{6}\, z\, Q(z\,;0).$
\end{rem}

The partial-fraction expansion in \eqref{rational} can be written as
\begin{align}\label{Partial_fraction}
     g(x)
     %&= -\frac1{12}\frac1{x^2+1} +\frac13\frac1{x^2+4} -\frac14\frac1{x^2+9}\\
     &=\sum_{k=1}^3\frac{d_k}{x^2+k^2}, 
\end{align}
where $d_1=-\frac{1}{12}$, $d_2=\frac{1}{3}$, and $d_3=-\frac{1}{4}$. Since the Fourier transform of the standard rational component is given by
$$ \widehat{\frac1{x^2+c^2}}(\xi) = \frac{\pi}{c}e^{-2\pi c| \xi|}, $$
we obtain the Fourier transform of the window function
$$\widehat g(\xi) = -\frac{\pi}{12} \left( e^{-2\pi|\xi|} -2e^{-4\pi|\xi|} +e^{-6\pi|\xi|} \right). $$
Define
\begin{align}\label{h_function}
h_\lambda(\xi) = e^{-\lambda|\xi|} - 2e^{-2\lambda|\xi|} + e^{-3\lambda|\xi|} = e^{-\lambda|\xi|} (1- e^{-\lambda|\xi|})^2.
\end{align}
Fourier duality and dilation give us that the Gabor system $\mathcal{G}(g,\alpha,\beta)$ forms a frame if and only if the modified system $\mathcal{G}(h_\lambda,a,1)$ forms a frame, where
$a=\alpha\beta$ and $\lambda=\frac{2\pi}{\alpha} =\frac{2\pi\beta}{a}.$
Throughout this paper, we use $a$ to denote $\alpha\beta$. Equivalently, define
$g_{\beta}(x)=g(x/\beta)$. Then, the system $\mathcal{G}(g,\alpha,\beta)$ forms a frame if and only if the system $\mathcal{G}(g_\beta, a,1)$ forms a frame. Moreover
$$ \widehat g_{\beta}(\xi) = -\frac{\pi\beta}{12} e^{-2\pi\beta|\xi|} (1-e^{-2\pi\beta|\xi|})^2.$$
In particular, for $\xi > 0$ and $\beta > 0$,
$\widehat {g_{\beta}}(\xi)\neq 0.$

Let us define $g_\beta(t)=g(t/\beta)$.
Let $\mathbf M(t\,; x)$ be a $q\times p$ Zibulski-Zeevi matrix \cite{zibulski1997analysis} for $g_\beta$ with entries
\begin{align}\label{M_matrix}
    \mathbf M_{sr}(t\,; x)=\mathcal Z_{g_\beta}(t-a s; x+r/p)
\end{align}
for $(t\,; x)\in [0,1)\times[0,1/p)$. We define an associated $q \times p$ matrix $\mathbf B(z; x)$ on $(z\,; x)\in \mathbb{T}\times[0,1/p)$ with entries 
$$(\mathbf B (z; x))_{sr} = e^{-2\pi i\frac{rs}{q}}Q_\beta\left(ze^{-2\pi ia s}; x+\frac{r}{p}\right),$$ 
where
$$Q_\beta(z; x)=\sum_{k=1}^6 \frac{\widetilde b_k e^{\tau x \omega_k}}{1-ze^{\tau \omega_k}}$$
for $0\le s\le q-1$ and $0\le r\le p-1$.
From Proposition \ref{P: zak and Q}, 
$$\mathbf M_{sr}(t\,; x)=
\Bigl(
\frac{\pi\beta}{6}\,
e^{2\pi i x(t-a s)}
e^{2\pi i(r/p)t}
\Bigr)
\,(\mathbf B (z\,; x))_{sr},\quad z=e^{2\pi it}.
$$
Therefore,
$$\mathbf M(t\,; x)=
\operatorname{diag}(\mathbf D_s(t\,; x))\,
\mathbf{B}(z\,; x)\,
\operatorname{diag}(E_r(t)),
$$
where
$$\mathbf D_s(t\,; x)=\frac{\pi}{6}\,\beta e^{2\pi i x(t-a s)}
\qtq{and} E_r(t)=
e^{2\pi i(r/p)t}.$$
Since both diagonal matrices are invertible 
$$\rank \mathbf{M}(t\,; x)=\rank \mathbf{B}(z\,; x).$$
% Now define
% $$R_\beta(z)=\prod_{k=1}^{3}
% (1-ze^{2\pi\beta k})
% (1-ze^{-2\pi\beta k}),$$
% and
% $$\mathcal{P}_\beta(z; x)=
% \mathcal R_\beta(z) \mathcal Q_\beta(z; x)=\sum_{l=0}^{5}m_{l,\beta}( x)z^l.$$

Finally, we introduce a $q\times p$ matrix $ \Phi(z\,;  x)$
\begin{align}\label{final Phi_alpha matrix}
    &= \begin{bmatrix}
    \mathcal{P}_\beta(z\,; x)& \mathcal{P}_\beta\left(z\,; x+\frac{1}{p}\right)& \cdots& \mathcal{P}_\beta\left(z\,; x+\frac{p-1}{p}\right)\\[0.25em]
   \mathcal{P}_\beta(ze^{-2\pi ia}\,; x)& e^{-\frac{2\pi i}{q}}\mathcal{P}_\beta\left(ze^{-2\pi ia}\,; x+\frac{1}{p}\right)& \cdots&e^{-2\pi i\frac{p-1}{q}}\mathcal{P}_\beta\left(ze^{-2\pi ia}\,; x+\frac{p-1}{p}\right)\\[0.25em]
    \vdots&\vdots&\cdots&\vdots\\[0.25em]
    \mathcal{P}_\beta(ze^{-2\pi ia(q-1)}\,; x)& e^{\frac{2\pi i}{q}}\mathcal{P}_\beta\left(ze^{-2\pi ia(q-1)}\,; x+\frac{1}{p}\right)& \cdots&e^{2\pi i\frac{p-1}{q}}\mathcal{P}_\beta\left(ze^{-2\pi ia(q-1)}\,; x+\frac{p-1}{p}\right)
\end{bmatrix}.
\end{align}
Consequently, from \eqref{P_Ndefinition}, we have
\begin{align*}
(\Phi(z\,; x))_{sr}&=e^{-2 \pi i \frac{r}{q}s}\,\mathcal{P}_\beta\left(ze^{-2 \pi ia s}\, ; x+\frac{r}{p}\right),\\
&=e^{-2\pi i\frac{r}{q}s}
\mathcal R_\beta(ze^{-2\pi ia s})\,\mathcal Q_\beta\left(
ze^{-2\pi ia s}\,;
 x+\frac r p
\right) 
\end{align*}
for $z=e^{2\pi it}$. Since the zeros of $\mathcal R_\beta$ are $e^{\pm2\pi\beta k}$ in \eqref{R_beta}, none lie on $\mathbb T$. Hence, for $|z|=1$,
$$\mathcal R_\beta(z_s)\neq0,\quad z_s=ze^{-2\pi ia s}$$
so
$$\operatorname{rank}\mathbf{M}(t, x)=\operatorname{rank}
\Phi(z; x).$$

The following theorem establishes that the frame property for the Gabor system generated by $g$ is equivalent to the full-rank condition of the Zibulski--Zeevi matrix \eqref{M_matrix} and its equivalent matrix representations \eqref{final Phi_alpha matrix}.
\begin{theorem}\cite{ghosh2026gabor}\label{rank equiv}
    Let $g$ be a window function in \eqref{window}. Then the Gabor system $\mathcal{G}(g,\alpha,\beta)$ forms a frame for $\alpha\beta=\frac{p}{q}$ with $\gcd (p,q)=1$ if and only if 
    $$\rank \mathbf{M}(t\,; x)=\rank \mathbf{B} (z\,; x)=\rank \Phi(z\,; x)=p,$$
    for $(z\,, x)\in \mathbb{T}\times \left[0,1/p\right)$.
\end{theorem}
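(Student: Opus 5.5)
The plan is to combine the Zibulski--Zeevi characterization of rational Gabor frames with continuity of the Zak transform and the factorizations already derived above. By the dilation argument preceding \eqref{M_matrix}, $\mathcal G(g,\alpha,\beta)$ is a frame if and only if $\mathcal G(g_\beta,a,1)$ is a frame, with $a=p/q$ and $\gcd(p,q)=1$. Since $a<1$ forces $q>p$, the $q\times p$ matrix $\mathbf M(t;x)$ has rank $p$ exactly when it has full column rank.

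\emph{Step 1 (Zibulski--Zeevi).} I would invoke the Zibulski--Zeevi theorem \cite{zibulski1997analysis}. It says that the optimal frame bounds of $\mathcal G(g_\beta,p/q,1)$ are, up to the normalizing factor $1/p$, the essential infimum and essential supremum over $(t,x)\in[0,1)\times[0,1/p)$ of the extreme eigenvalues of $\mathbf M(t;x)^*\mathbf M(t;x)$.

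\emph{Step 2 (continuity and compactness).} Since $g$ decays like $|x|^{-4}$, we have $g_\beta\in W(L^\infty,\ell^1)$. Hence the series \eqref{Zak} converges absolutely and uniformly on compact sets, so $\mathcal Z_{g_\beta}$ is continuous and bounded on $\mathbb R^2$.
\begin{itemize}
\item The upper bound is then automatic.
\item The smallest eigenvalue $\lambda_{\min}(t,x)$ of $\mathbf M^*\mathbf M$ is a continuous function of $(t,x)$.
\item I would extend the domain to the compact set $[0,1]\times[0,1/p]$. The quasi-periodicity $\mathcal Z(t,x+1)=\mathcal Z(t,x)$ and $\mathcal Z(t+1,x)=e^{2\pi i x}\mathcal Z(t,x)$ shows that the matrix at $x=1/p$ (respectively $t=1$) equals the matrix at $x=0$ (respectively $t=0$) up to a cyclic column permutation and unimodular diagonal factors. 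So the rank on the closed set is the same as on the half-open domain.
\end{itemize}
Consequently $\operatorname{ess\,inf}\lambda_{\min}>0$ if and only if $\lambda_{\min}>0$ everywhere. Indeed, a single point of rank deficiency forces $\lambda_{\min}=0$ there, and by continuity $\lambda_{\min}$ is small on a neighbourhood of positive measure. Conversely, a positive continuous function on a compact set is bounded below. This gives the equivalence: frame if and only if $\rank\mathbf M(t;x)=p$ for all $(t,x)$.

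\emph{Step 3 (transfer to $\mathbf B$ and $\Phi$).} Here I would use the identities already established.
\begin{itemize}
\item For $0<x<1$, Proposition~\ref{P: zak and Q} gives $\mathbf M=\operatorname{diag}(\mathbf D_s)\,\mathbf B\,\operatorname{diag}(E_r)$ with invertible diagonal factors, so $\rank\mathbf M=\rank\mathbf B$.
\item At the single value $x=0$, only the column $r=0$ uses the endpoint formula, and the Remark after Proposition~\ref{P: zak and Q} supplies the extra unimodular factor $z$. That factor is again absorbed into an invertible diagonal.
\item Passing from $\mathbf B$ to $\Phi$ multiplies row $s$ by $\mathcal R_\beta(ze^{-2\pi i as})$. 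By \eqref{R_beta}, this is nonzero for $|z|=1$, since the zeros $e^{\pm 2\pi\beta k}$ lie off $\mathbb T$.
\end{itemize}
Hence all three ranks coincide.

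The main obstacle is the careful handling of the boundary $x=0$ and the extension to the compact closure in Step 2, where the representation through $\mathcal Q_\beta$ is discontinuous in $x$ even though $\mathcal Z_{g_\beta}$ itself is continuous. To avoid this issue, I would phrase the compactness argument entirely in terms of $\mathbf M$, which is built from the continuous function $\mathcal Z_{g_\beta}$, and only afterwards pass to $\mathbf B$ and $\Phi$ pointwise.
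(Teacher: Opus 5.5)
Your overall route is the one the paper relies on: the Zibulski--Zeevi criterion for $\mathcal G(g_\beta,a,1)$, the factorization $\mathbf M=\operatorname{diag}(\mathbf D_s)\,\mathbf B\,\operatorname{diag}(E_r)$, and $\Phi=\operatorname{diag}\bigl(\mathcal R_\beta(ze^{-2\pi i as})\bigr)\mathbf B$ with $\mathcal R_\beta\neq 0$ on $\mathbb T$. Your Step 2 correctly supplies the passage from an essential infimum to a pointwise rank condition, which the paper (citing the result) does not spell out. The one step that fails as written is the treatment of $x=0$ in Step 3.

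The entry $(s,0)$ of $\mathbf M(t;0)$ is $\mathcal Z_{g_\beta}(t-as;0)$. Applying the endpoint Remark at the point $t-as$ therefore contributes the factor $e^{2\pi i(t-as)}=ze^{-2\pi i as}$, not $z$. This factor varies with the row index $s$ and multiplies only the column $r=0$. Rescaling a single column by row-dependent scalars is not of the form $D_1\mathbf B D_2$ with diagonal $D_1,D_2$, and it can change the rank. So $\operatorname{rank}\mathbf M(t;0)=\operatorname{rank}\mathbf B(z;0)$ does not follow from your argument.

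The repair uses what you already proved. $\mathcal Z_{g_\beta}$ is continuous, and for $|z|=1$ the map $x\mapsto \frac{\pi\beta}{6}e^{2\pi i x t}\mathcal Q_\beta(z;x)$ is continuous on $[0,1]$. Letting $x\to 0^+$ in Proposition~\ref{P: zak and Q} therefore gives $\mathcal Z_{g_\beta}(t;0)=\frac{\pi\beta}{6}\mathcal Q_\beta(z;0)$ with no extra factor. Hence the factorization $\mathbf M=\operatorname{diag}(\mathbf D_s)\,\mathbf B\,\operatorname{diag}(E_r)$ holds verbatim on all of $[0,1)\times[0,1/p)$.

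This is consistent with $1$-periodicity in $x$, because $z\,\mathcal Q_\beta(z;1)=\mathcal Q_\beta(z;0)-\sum_k\widetilde b_k=\mathcal Q_\beta(z;0)$. In particular, your closing claim that the $\mathcal Q_\beta$-representation is discontinuous in $x$ is mistaken. Read literally with $Q(z;0)$, the endpoint Remark would contradict the continuity you established in Step 2. The form consistent with continuity is $\frac{\pi}{6}z\,Q(z;1)=\frac{\pi}{6}Q(z;0)$.
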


To explicitly verify the full-rank condition of Theorem \ref{rank equiv}, we must analyze the algebraic properties of its associated matrix $\Phi(z\,; x)$. By leveraging the polynomial reflection identities in \eqref{eq:poly_reciprocity}, the following lemma demonstrates a fundamental structural symmetry in the frequency parameter $ x$. Consequently, it is sufficient to analyze the $\Phi(z\,; x)$ matrix only for $ x\in [0, \frac{1}{2p}]$.

\begin{lem}\cite{ghosh2026gabor}\label{symmetry_singular_values}
Let $\alpha\beta=p/q$ and let $\Phi(z; x)$ be the matrix associated
with the window $g$. Then, for every $z\in\mathbb T$ and
$ x\in[0,1/p)$, there exist permutation matrices $R$ and $P$ and a diagonal unitary matrix $U(z)$ such that
\begin{equation}\label{Phi_reflection}
\Phi\left(z;\frac1p- x\right)
=
U(z)\,R\,\Phi(z^{-1}; x)\,P .
\end{equation}
Consequently,
$$
\operatorname{rank}\Phi\left(z;\frac1p- x\right)
=
\operatorname{rank}\Phi(z^{-1}; x).
$$
In particular, it is enough to verify the full-rank condition for $ x\in\left[0,\frac1{2p}\right].$
\end{lem}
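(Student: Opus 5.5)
The plan is to read off the identity \eqref{Phi_reflection} entrywise from the polynomial reciprocity of Lemma \ref{poly_reciprocity}. Throughout, $a=p/q$ and $z\in\mathbb T$.

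\textbf{Step 1: rewrite an entry on the left.} Fix $0\le s\le q-1$ and $0\le r\le p-1$, and set $r'=p-1-r$. The $(s,r)$ entry of $\Phi(z;\frac1p-x)$ is
$$
e^{-2\pi i\frac{rs}{q}}\,
\mathcal P_\beta\!\left(w_s;\ \tfrac1p-x+\tfrac rp\right),
\qquad w_s=ze^{-2\pi i a s}.
$$
Its second argument is $\tfrac{r+1}{p}-x = 1-\bigl(x+\tfrac{r'}{p}\bigr)$. Since $x+\tfrac{r'}{p}\in[0,1]$, Lemma \ref{poly_reciprocity} applies and gives
$$
\mathcal P_\beta\!\left(w_s;\ 1-\bigl(x+\tfrac{r'}{p}\bigr)\right)
= w_s^{5}\,\mathcal P_\beta\!\left(w_s^{-1};\ x+\tfrac{r'}{p}\right).
$$

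\textbf{Step 2: identify the entry of $\Phi(z^{-1};x)$.} Note that $w_s^{-1}=z^{-1}e^{2\pi i a s}$. Because $e^{2\pi i a s}=e^{2\pi i p s/q}$ depends only on $s \bmod q$, we may write $w_s^{-1}=z^{-1}e^{-2\pi i a s'}$ with $s'\equiv -s \pmod q$, $0\le s'\le q-1$. So the value above is the $(s',r')$ entry of $\Phi(z^{-1};x)$, up to phases.

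\textbf{Step 3: match the phases.} The phase attached to entry $(s',r')$ of $\Phi(z^{-1};x)$ is $e^{-2\pi i r's'/q}$. Modulo $q$,
$$
r's'\equiv -(p-1-r)s = rs-(p-1)s .
$$
Hence
$$
e^{-2\pi i rs/q}=e^{-2\pi i r's'/q}\,e^{-2\pi i (p-1)s/q}.
$$
Together with $w_s^5=z^5e^{-2\pi i\cdot 5ps/q}$, every leftover factor depends only on the row index $s$ and has modulus one on $\mathbb T$. This yields the factorization:
\begin{itemize}
\item $U(z)=\operatorname{diag}\bigl(z^5e^{-2\pi i(5p+p-1)s/q}\bigr)_{s}$, a diagonal unitary matrix;
\item $R$ is the row permutation $s\mapsto -s \bmod q$;
\item $P$ is the column reversal $r\mapsto p-1-r$.
\end{itemize}
The rank equality then follows because $U(z)$, $R$ and $P$ are invertible.

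\textbf{Step 4: reduce to $[0,\frac1{2p}]$.} If $x\in[\frac1{2p},\frac1p)$, then $y=\frac1p-x\in(0,\frac1{2p}]$. By the identity, the rank of $\Phi$ at $(z,x)$ equals the rank at $(z^{-1},y)$. As $z$ runs over $\mathbb T$, so does $z^{-1}$, so checking full rank for $x\in[0,\frac1{2p}]$ and all $z\in\mathbb T$ suffices.

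The main obstacle is only bookkeeping. One must keep the sign conventions in the phases straight, so that the residual factor is purely row-dependent. One must also confirm that Lemma \ref{poly_reciprocity} is used only with second argument in $[0,1]$; this holds here, and the defining formulas for the $m_{l,\beta}$ in fact make the identity valid for all real $x$.
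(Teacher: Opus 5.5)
Your proposal is correct and takes the same route the paper indicates: the paper gives no proof here and cites \cite{ghosh2026gabor}, but it says the lemma comes from the reflection identity \eqref{eq:poly_reciprocity}. You apply Lemma \ref{poly_reciprocity} entrywise, reindex rows by $s\mapsto -s \bmod q$ and columns by $r\mapsto p-1-r$, and collect the purely row-dependent unimodular factor $z^5e^{-2\pi i(6p-1)s/q}$; your phase bookkeeping and the reduction to $x\in[0,\frac1{2p}]$ both check out (for instance, $p=1,q=2$ gives $U(z)=\operatorname{diag}(z^5,-z^5)$, as it should).
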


% This dilation issue is important: after normalization, the poles are
% $$ \pm i\beta,\quad \pm2i\beta,\quad\pm3i\beta, $$
% not the original $\pm i,\pm2i,\pm3i$.

\section{Frame region for \texorpdfstring{$g$}{g2}} \label{Section_3}
\begin{proof}[\textbf{Proof of Theorem \ref{frame_region0}}]
We now prove the frame property of
$\mathcal G(h_\lambda,a,1)$. For $\xi\in\mathbb R$, consider its Ron--Shen \cite{ronshen} pre-Gramian
$$ P_{\xi}= \bigl( h_\lambda(\xi+aj-k) \bigr)_{j,k\in\mathbb Z}.$$
It is enough to prove that there exists a constant $C>0$, independent of $\xi$, such that
$$ \|P_{\xi} c\|_{\ell^2} \ge C\|c\|_{\ell^2}, \quad c\in\ell^2(\mathbb Z).$$
A direct computation gives $h_\lambda(0)=h_\lambda'(0)=0$, and the left and right second derivatives agree. Hence $h_\lambda\in C^2(\mathbb{R})$. Let 
\begin{align}\label{H_c_func}
    H_c(\xi)=\sum_{k\in \Z} c_k h_\lambda(\xi-k).
\end{align}
Fix $m\in\mathbb Z$ and let $m<\xi<m+1$. Then, for $k\le m$, we have
$|\xi-k|=\xi-k$, whereas for $k\ge m+1$, we have $|\xi-k|=k-\xi$. Hence, \eqref{H_c_func} becomes
\begin{align*}
H_c(\xi)&=\sum_{k\le m}c_k h_\lambda(\xi-k)
  +\sum_{k\ge m+1}c_k h_\lambda(\xi-k)\\
&=\sum_{r=1}^3 a_r e^{-r\lambda \xi}
 \sum_{k\le m} c_k e^{r\lambda k}+\sum_{r=1}^3 a_r e^{r\lambda \xi} \sum_{k\ge m+1} c_k e^{-r\lambda k},
\end{align*}
where $(a_1,a_2,a_3)=(1,-2,1)$.
Therefore,
\[H_c(\xi)=A_{1,m}e^{-\lambda \xi}
-2A_{2,m}e^{-2\lambda \xi}
+A_{3,m}e^{-3\lambda \xi}+B_{1,m}e^{\lambda \xi}
-2B_{2,m}e^{2\lambda \xi}
+B_{3,m}e^{3\lambda \xi},\]
where
\[
A_{r,m}=\sum_{k\le m}c_k e^{r\lambda k},
\qtq{and}
B_{r,m}=\sum_{k\ge m+1}c_k e^{-r\lambda k}.
\]
Thus,
\[
H_c|_{(m,m+1)}
\in
E_\lambda:=\operatorname{span}\{e^{\pm\lambda\xi},e^{\pm2\lambda \xi},e^{\pm3\lambda \xi}\}.
\]
Since the six exponents are distinct and real, $E_\lambda$ is an extended complete Chebyshev space on every bounded interval.
Now fix $L\in\mathbb{N}$ and write $I_m=(m,m+L)$. Consider the spline space
$$\mathcal{S}_{\lambda,L}(I_m)=\left\{f\in C^2(I_m): f\vert{}_{(m+r,m+r+1)}\in E_\lambda,\;0\le r\le L-1\right\}.$$
Each of the $L$ cells contributes six coefficients. At each of the $L-1$ interior integer knots, continuity of $f,f',f''$ gives three independent matching conditions. Consequently,
$$\dim\mathcal{S}_{\lambda,L}=6L-3(L-1)=3L+3.$$
In particular, $H_c\vert{}_{I_m}\in\mathcal{S}_{\lambda,L}(I_m)$.

Since $1/a-3>0$, choose $L$ sufficiently large so that $L(1/a-3)>3$, which is equivalently 
\begin{align}\label{L/a>3L+3}
    \frac{L}{a}>3L+3.
\end{align}
For a phase $\theta\in\mathbb{R}/a\mathbb{Z}$, consider the lattice $X_\theta=\theta+a\mathbb{Z}$. An open interval of length \(s\) contains at least \(\lceil s/a\rceil-1\) points of \(\theta+a\mathbb Z\). Since $1/a>3$, every open interval of length one contains at least three points of $X_\theta$. Likewise, by \eqref{L/a>3L+3}, every interval of length $L$ contains at least $3L+3$ lattice points.

In each cell $(m+r,m+r+1)$ for $r=0,\ldots,L-1$, choose three points of $X_\theta$. This gives $3L$ points. Choose three further lattice points from $I_m$, and arrange the resulting $3L+3$ nodes 
$t_1<t_2<\cdots<t_{3L+3}$. For each $j=1,\dots, L-1$, the first $j$ cells $(m,m+1),\dots,(m+j-1,m+j)$
contain the $3j$ points chosen there, so at least $3j$ selected nodes lie to the left of the knot $m+j$. Therefore, $t_{3j}<m+j$.
Moreover, besides those $3j$ points, at most the three additional nodes may also lie to the left of $m+j$. Hence, at most $3j+3$ selected nodes lie at or to the left of $m+j$, which gives $m+j<t_{3j+4}$. Thus,
\begin{align}\label{Schoenberg-Whitney_inequality}
    t_{3j}<m+j<t_{3j+4},
\quad j=1,\dots,L-1.
\end{align}
To apply the generalized Schoenberg--Whitney theorem \cite{karlinziegler, schoenbergwhitney, schumaker}, note that \(\mathcal S_{\lambda,L}(I_m)\) is a Chebyshevian spline space of order \(6\). Its extended knot sequence consists of six copies of each endpoint and three copies of every interior knot
$$ \underbrace{m,\ldots,m}_{6}, \underbrace{m+1,\ldots,m+1}_{3}, \ldots, \underbrace{m+L-1,\ldots,m+L-1}_{3}, \underbrace{m+L,\ldots,m+L}_{6}.$$
If this sequence is denoted by \((\tau_i)\), the generalized Schoenberg--Whitney conditions are
\begin{align}\label{knots_SW}
     \tau_i<t_i<\tau_{i+6}, \quad i=1,\ldots,3L+3.
\end{align}
Because all the nodes lie in \((m,m+L)\), the conditions involving the endpoints are automatic, while at the interior knots \eqref{knots_SW} reduces precisely to \eqref{Schoenberg-Whitney_inequality}.

By the generalized Schoenberg--Whitney theorem, the collocation map
\[T_\theta:\mathcal S_{\lambda,L}(I_m)\longrightarrow\mathbb C^{3L+3},\quad T_\theta f=\bigl(f(t_1),\ldots,f(t_{3L+3})\bigr),
\]
is injective. Since $\dim\mathcal S_{\lambda,L}(I_m)=3L+3$,
the map \(T_\theta\) is an isomorphism. Consequently, for each fixed
phase \(\theta\), there exists \(C_\theta>0\) such that
\begin{align}\label{collocation_isomorphism}
\|f\|_{L^2(I_m)}^2\le C_\theta\sum_{i=1}^{3L+3}|f(t_i)|^2,\quad f\in\mathcal S_{\lambda,L}(I_m).
\end{align}

We next show that the constant in \eqref{collocation_isomorphism} can be chosen
independently of the phase \(\theta\). It is enough first to consider \(m=0\), since integer translation maps
\(\mathcal S_{\lambda,L}(I_m)\) isomorphically onto
\(\mathcal S_{\lambda,L}((0,L))\). Fix a phase
$\theta_0\in\mathbb R/a\mathbb Z$.
Choose \(3L+3\) lattice points
\[
t_i(\theta_0)=\theta_0+a n_i,
\quad i=1,\ldots,3L+3,
\]
satisfying the Schoenberg--Whitney inequalities \eqref{Schoenberg-Whitney_inequality}. Here the
integers \(n_1<\cdots<n_{3L+3}\) are fixed. Since all inequalities in \eqref{Schoenberg-Whitney_inequality} are strict, there exists a
neighborhood \(U_{\theta_0}\) of \(\theta_0\) such that, for every
\(\theta\in U_{\theta_0}\), the same lattice indices \(n_i\) give nodes
\[
t_i(\theta)=\theta+a n_i
\]
which still lie in \((0,L)\) and satisfy the same
Schoenberg--Whitney inequalities.

Choose a basis
$\varphi_1,\ldots,\varphi_{3L+3}\in \mathcal S_{\lambda,L}((0,L))$, and let
\[
M(\theta)
=
\bigl(\varphi_\ell(t_i(\theta))\bigr)_
{1\le i,\ell\le3L+3}
\]
be the corresponding collocation matrix. Since each
\(\varphi_\ell\) is continuous, \(M(\theta)\) depends continuously on
\(\theta\). By the Schoenberg--Whitney theorem, $\det M(\theta_0)\neq 0$.
Hence, after possibly shrinking \(U_{\theta_0}\),
\[
\inf_{\theta\in U_{\theta_0}}
\sigma_{\min}(M(\theta))>0.
\]

Therefore, there exists a constant \(C_{\theta_0}<\infty\) such that
\[\|f\|_{L^2(0,L)}^2\le C_{\theta_0}\sum_{i=1}^{3L+3} |f(t_i(\theta))|^2
\]
for every \(\theta\in U_{\theta_0}\) and every
\(f\in\mathcal S_{\lambda,L}((0,L))\). The space $\mathbb R/a\mathbb Z$ is compact. Hence, finitely many neighborhoods $U_{\theta_1},\ldots,U_{\theta_N}$of the above type cover it. Taking
\[C_{a,\lambda,L}=\max_{1\le \nu\le N}C_{\theta_\nu},
\]
we obtain a constant independent of \(\theta\) such that
\[\|f\|_{L^2(0,L)}^2\le C_{a,\lambda,L}
\sum_{\substack{y\in X_\theta\\0<y<L}}
|f(y)|^2.\]
Here we have enlarged the sum from the selected \(3L+3\) nodes to all lattice points in the interval, which can only increase its right-hand side. By integer translation, the same constant works on every interval
\(I_m=(m,m+L)\). Thus
\[\|f\|_{L^2(m,m+L)}^2
\le
C_{a,\lambda,L}
\sum_{\substack{y\in X_\theta\\m<y<m+L}}|f(y)|^2,\]
uniformly in \(m\in\mathbb Z\), in the phase
\(\theta\in\mathbb R/a\mathbb Z\), and in
\(f\in\mathcal S_{\lambda,L}(I_m)\).

Almost every point $\xi\in\mathbb{R}$ belongs to exactly $L$ intervals of the form $(m,m+L)$. Consequently,
$$\sum_{m\in\mathbb{Z}}\Vert{}H_c\Vert{}_{L^2(m,m+L)}^2=L\Vert{}H_c\Vert{}_2^2.$$
Similarly, every point of $x+a\mathbb{Z}$ belongs to at most $L$ of these intervals. Hence,
$$L\Vert{}H_c\Vert{}_2^2\le C_{a,\lambda,L}\,L\sum_{j\in\mathbb{Z}}\vert{}H_c(\xi+aj)\vert{}^2.$$
Therefore,
\begin{align}\label{sum_Fc_bound}
\sum_{j\in\mathbb{Z}}\vert{}H_c(\xi+aj)\vert{}^2\ge C_{a,\lambda,L}^{-1}\Vert{}H_c\Vert{}_2^2,
\end{align}
uniformly in $x$. With the Fourier transform
% $$\widehat{e^{-\gamma\vert{}\cdot\vert{}}}(\xi)=\frac{2\gamma}{\gamma^2+4\pi^2\xi^2}.$$
% Hence,
\begin{align*}
\widehat{h}_\lambda(\omega)=&\frac{2\lambda}{\lambda^2+4\pi^2\omega^2}-\frac{8\lambda}{4\lambda^2+4\pi^2\omega^2}+\frac{6\lambda}{9\lambda^2+4\pi^2\omega^2}\\
=&\frac{24\lambda^3(\lambda^2-4\pi^2\omega^2)}{(\lambda^2+4\pi^2\omega^2)(4\lambda^2+4\pi^2\omega^2)(9\lambda^2+4\pi^2\omega^2)}=-\frac{24}{\lambda}g\left(\frac{2\pi}{\lambda}\omega\right).
\end{align*}
Thus, the only real zeros of $\widehat{h}_\lambda$ are $\omega=\pm\frac{\lambda}{2\pi}.$
Define
\begin{align}\label{periodized_Fourier_Gramian}
G_\lambda(\omega)=\sum_{n\in\mathbb{Z}}\vert{}\widehat{h}_\lambda(\omega+n)\vert{}^2.
\end{align}
Since $\widehat{h}_\lambda(\omega)=O(\vert{}\omega\vert{}^{-4})$ as $\vert{}\omega\vert{}\to\infty$, the series in \eqref{periodized_Fourier_Gramian} converges uniformly for $\omega\in[0,1]$. Hence, $G_\lambda$ is continuous and 1-periodic.

Moreover,
$G_\lambda(\omega)>0$ for every $\omega\in\mathbb{R}.$ Indeed, if $G_\lambda(\omega)=0$, then $\widehat{h}_\lambda(\omega+n)=0$ for all $n\in\mathbb{Z}$. This would force the infinite set $\omega+\mathbb{Z}$ to be contained in the two-point set $\{-\frac{\lambda}{2\pi},\frac{\lambda}{2\pi}\}$, which is impossible. Therefore, by compactness,
$$A_\lambda:=\min_{\omega\in[0,1]}G_\lambda(\omega)>0.$$
Let $C(\omega)=\sum_{k\in\mathbb{Z}}c_ke^{-2\pi ik\omega}$. Since
$\widehat H_c(\omega) = \widehat h_\lambda(\omega)C(\omega)$, 
Parseval's identity and periodization give
\begin{align}\label{H_c_eqn}
\Vert{}H_c\Vert{}_2^2&=\int_0^1\vert{}C(\xi)\vert{}^2G_\lambda(\xi)\,d\xi\nonumber\\
    &\ge A_\lambda\int_0^1\vert{}C(\xi)\vert{}^2\,d\xi=A_\lambda\Vert{}c\Vert{}_{\ell^2}^2.
\end{align}
Combining \eqref{sum_Fc_bound} and \eqref{H_c_eqn}, we obtain
$$\sum_{j\in\mathbb{Z}}\left\vert{}\sum_{k\in\mathbb{Z}}c_kh_\lambda(\xi+aj-k)\right\vert{}^2\ge\frac{A_\lambda}{C_{a,\lambda,L}}\Vert{}c\Vert{}_{\ell^2}^2,$$
uniformly in $\xi$. Since $h_\lambda$ decays exponentially, the corresponding upper pre-Gramian estimate is automatic. The Ron--Shen criterion therefore implies that $\mathcal{G}(h_\lambda,a,1)$ is a frame for $L^2(\mathbb{R})$. Hence, we prove our result.
\end{proof}

\begin{proof}[\textbf{Proof of Theorem \ref{frame_region1}}]
    We now prove the frame property of
$\mathcal G(h_\lambda,a,1)$. For $\xi\in\mathbb R$, consider its Ron--Shen \cite{ronshen} pre-Gramian
$$ P_{\xi}= \bigl( h_\lambda(\xi+aj-k) \bigr)_{j,k\in\mathbb Z}.$$
It is enough to prove that there exists a constant $C>0$, independent of $\xi$, such that
$$ \|P_{\xi} c\|_{\ell^2} \ge C\|c\|_{\ell^2}, \quad c\in\ell^2(\mathbb Z).$$
Since $a<a_*<1/2$, throughout the proof we have $ 0<a<\frac12$. For each \(k\in\mathbb Z\), choose \(n\in\mathbb Z\) such that
$$ \xi+an\le k\le \xi+a(n+1). $$
The two consecutive lattice points \(\xi+an\) and \(\xi+a(n+1)\) are separated by distance \(a\), and the sum of their distances from \(k\) is therefore \(a\). Choosing $y_k=\xi+aj_k$, for some $j_k\in\mathbb Z$ to be the farther of the two gives
$$ \frac a2\le |y_k-k|\le a. $$
The indices $j_k$ are all distinct. Indeed, if $y_k=y_l$ for $k\ne l$, then
$$ |k-l| \le |k-y_k|+|y_l-l| \le2a<1, $$
which is impossible because $k-l$ is a nonzero integer. Hence, the rows indexed by $j_k$ form a legitimate row-submatrix of $P_x$. Denote this submatrix by
$$A_{\xi}=\left[h_\lambda(y_k-l)\right]_{k,l\in\Z}.$$
Since
$\frac{\lambda a}{2} = \pi\beta\ge\pi>\log3, $
the function
$$ h_\lambda(\xi) = e^{-\lambda \xi}(1-e^{-\lambda \xi})^2, \quad \xi>0, $$
is decreasing on $[a/2,\infty)$. Therefore, $h_\lambda(y_k-k) \ge h_\lambda(a).$
Put $\rho=e^{-\lambda a}=e^{-2\pi\beta}.$
Then every selected diagonal entry satisfies
$$h_\lambda(y_k-k)\ge\rho(1-\rho)^2.$$
Write
$A_{\xi}=D_{\xi}+E_{\xi}$, where $D_{\xi}$ is  a diagonal matrix with
$(D_{\xi})_{kk}=h_\lambda(y_k-k)$
and
$\|D_{\xi} c\|_{\ell^2} \ge \rho(1-\rho)^2\|c\|_{\ell^2}$.
We now estimate the off-diagonal part. For $l\neq k$,
$|y_k-l|\ge |k-l|-a.$ Since
$0<h_\lambda(t)\le e^{-\lambda|t|},$
the off-diagonal row sum is bounded by
\begin{align} 
\sum_{l\neq k}|h_\lambda(y_k-l)| &\le 2\sum_{n=1}^\infty e^{-\lambda(n-a)}= \frac{2e^{-\lambda(1-a)}}{1-e^{-\lambda}}. \nonumber
\end{align}
The same estimate holds for column sums. Hence, the Schur test gives
$$\|E_{\xi}\|_{\ell^2\to\ell^2}\le\frac{2e^{-\lambda(1-a)}}{1-e^{-\lambda}}.$$
Consequently,
$$\|A_{\xi} c\|_{\ell^2}\ge \|D_{\xi} c\|_{\ell^2}-\|E_{\xi} c\|_{\ell^2}\ge \left[\rho(1-\rho)^2-\frac{2e^{-\lambda(1-a)}}{1-e^{-\lambda}}\right]\|c\|_{\ell^2}.$$
Thus the selected Ron--Shen submatrix is bounded below whenever
$$\frac{2e^{-\lambda(1-a)}}{1-e^{-\lambda}}<\rho(1-\rho)^2. $$
Since $ e^{-\lambda}=\rho^{1/a}$,
this condition is equivalent to
$$f(a,\rho)=\frac{ 2\rho^{1/a-2} }{ (1-\rho^{1/a})(1-\rho)^2 }<1.$$
For fixed $a<1/2$, $f(a,\rho)$ is also strictly increasing in $\rho\in (0,1)$ for $\beta\ge 1$. Therefore, $f(a,\rho)\le f(a, e^{-2\pi})$. Moreover, $f(a,e^{-2\pi})$ is continuous and strictly increasing in $a\in(1/3,1/2)$ with
$$f\left(\frac{1}{3},e^{-2\pi}\right)<1,\qtq{and} \lim\limits_{a\rightarrow 1/2}f(a,e^{-2\pi})=\frac{2}{(1-e^{-4\pi})(1-e^{-2\pi})^2}>1.$$
Hence, there exists a unique $a_*\in (1/3,1/2)$ such that
$$f(a,\rho)\le f(a,e^{-2\pi})< f(a_*,e^{-2\pi})=1.$$ Numerically, $a_*\approx 0.47373.$
Hence,
$$ \delta:= \rho(1-\rho)^2- \frac{2e^{-\lambda(1-a)}}{1-e^{-\lambda}}>0, $$
and therefore
$\|A_\xi c\|_2\ge\delta\|c\|_2. $
Since \(A_\xi\) is obtained by selecting rows of \(P_\xi\),
$$ \|P_\xi c\|_2\ge\|A_\xi c\|_2\ge\delta\|c\|_2, $$
uniformly in \(\xi\).
Thus, we prove our result.
\end{proof}

\begin{pf}[\textbf{Proof of Theorem \ref{frame_region2}}]
Let
$q=3p-1$ and $ x_r= x+\frac rp$, for $0\le x\le\frac1{2p}$ and $0\le r\le p-1$. Suppose
$c=(c_0,\ldots,c_{p-1})\in\ker\Phi.$
Then the kernel equations are
$$ D_k(z\,; x)= \sum_{\substack{0\le r<p,\;0\le l\le5\\ r+pl\equiv k\;(\mathrm{mod}\ q)}} c_rm_{l,\beta}( x_r)z^l=0.$$
\textbf{Case 1: For $\bm p\ge3$.} For $0\le r\le p-2$, we have
$$ c_rm_{4,\beta}( x_r)z^4 + c_{r+1}m_{1,\beta}( x_{r+1})z=0. $$
Since $m_{1,\beta},m_{4,\beta}>0$, hence
$$c_{r+1} = -z^3c_r \frac{m_{4,\beta}( x_r)}{m_{1,\beta}( x_{r+1})}.$$
Thus, a nonzero kernel vector would have every $c_r\neq0$. The two boundary binomials are
$$ c_0m_{1,\beta}( x_0)z+ c_{p-1}m_{3,\beta}( x_{p-1})z^3=0$$
and
\begin{align}\label{case1_thm3.2_eqn1}
     c_0m_{2,\beta}( x_0)z^2+ c_{p-1}m_{4,\beta}( x_{p-1})z^4=0.
\end{align}
Let $y_0=1- x_{p-1} = \frac1p- x.$
Eliminating $c_{p-1}/c_0$ and using reflection in \eqref{reflectio_on_m} gives
\begin{align}\label{thm3.2_eqn1}
    m_{2,\beta}( x_0)m_{2,\beta}(y_0) = m_{1,\beta}( x_0)m_{1,\beta}(y_0)>0.
\end{align}
Hence, $m_{2,\beta}( x_0)$ and $m_{2,\beta}(y_0)$ have the same sign. If $m_{2,\beta}( x_0)<0$ and $m_{2,\beta}(y_0)<0$, then $ \theta_\beta< x_0$. Since $p\ge 3$, we have
$$D_{2p+1}(z\,; x)=c_0m_{5,\beta}( x_0)z^5+c_1m_{2,\beta}( x_1)z^2=0.$$
Using $c_1=-a_0c_0 z^3$ with $a_0=\frac{m_{4,\beta}(x_0)}{m_{1,\beta}(x_{1})}>0$, we obtain
$m_{2,\beta}( x_1) = \frac{m_{5,\beta}( x_0)}{a_0}>0$ which is a contradiction because $ x_1> x_0>\theta_\beta$. Therefore, we must have $m_{2,\beta}( x_0)>0$ and $m_{2,\beta}(y_0)>0$.

Now, we define
$$b_0=\frac{m_{2,\beta}(x_0)}{m_{4,\beta}(x_{p-1})}=\frac{m_{2,\beta}(x_0)}{m_{1,\beta}(y_0)}>0.$$
From \ref{case1_thm3.2_eqn1}, we have $c_{p-1}=-b_0 c_0 z^{-2}$. For $k=1$, we have
$$D_1(z\,; x)=c_0m_{3,\beta}( x_0)z^3+c_1m_{0,\beta}( x_1)+c_{p-1}m_{5,\beta}( x_{p-1})z^5=0.$$
It becomes
$$ m_{3,\beta}( x_0)-a_0m_{0,\beta}( x_1) -b_0 m_{5,\beta}( x_{p-1})=0.$$
Since $a_0, b_0>0$, we have $m_{3,\beta}( x_0)>0$. By reflection, $m_{2,\beta}(1- x_0)>0$. Since
$\theta_\beta>1- x_0 \ge 1-\frac1{2p} \ge\frac56$, we have
\begin{align}\label{thm3.2_case1}
    m_{2,\beta}(5/6)>0.
\end{align}
On the other hand, \eqref{thm3.2_eqn1} gives
$$ \frac{m_{2,\beta}( x_0)}{m_{1,\beta}( x_0)} \frac{m_{2,\beta}(y_0)}{m_{1,\beta}(y_0)} =1. $$
Consequently, one of the two ratios is at most $1$, while the other is at least $1$. Hence, $F_\beta( x_0)F_\beta(y_0)\le 0$. Thus, the unique zero $\rho_\beta$ of
$F_\beta( x)$ lies between $ x_0$ and $y_0$. Since $0\le  x_0,y_0\le\frac1p\le\frac13,$ we obtain $F_\beta(1/3)\le 0$.
From Lemma \ref{F_xi_lemma}, we obtain $m_{2,\beta}(5/6)<0$ which is a contradiction with \eqref{thm3.2_case1}. Hence, $\ker\Phi(z\,; x)=\{0\}$.
\\[0.25cm]
\noindent
\textbf{Case 2: For \bm{$p=2$}.}
Here $a=\frac25$ and $ x\in [0,\frac14]$. Let $x= x$ and $y=\frac12- x$.
The five equations are
\begin{align}
c_0m_{0,\beta}(x) +c_1m_{2,\beta}\left(x+\tfrac{1}{2}\right)z^2 +c_0m_{5,\beta}(x)z^5 &= 0, \label{1a} \\
c_1m_{0,\beta}\left(x+\tfrac{1}{2}\right) +c_0m_{3,\beta}(x)z^3 +c_1m_{5,\beta}\left(x+\tfrac{1}{2}\right)z^5 &= 0, \label{1b} \\
c_0m_{1,\beta}(x)z+c_1m_{2,\beta}(y)z^3 &= 0, \label{1c} \\
c_1m_{1,\beta}\left(x+\tfrac{1}{2}\right)z +c_0m_{1,\beta}(1-x)z^4 &= 0, \label{1d} \\
c_0m_{2,\beta}(x)z^2+c_1m_{1,\beta}(y)z^4 &= 0. \label{1e}
\end{align}
Since $m_{1,\beta}>0$, a nontrivial solution requires $c_0c_1\neq0$. Eliminating $c_1/c_0$ from \eqref{1c}, \eqref{1d}, and \eqref{1e} yields the magnitude condition
\begin{equation}
m_{1,\beta}(x)m_{1,\beta}(y) = m_{2,\beta}(x)m_{2,\beta}(y) \label{eqn_2_case2}
\end{equation}
and the phase condition
\begin{equation}
z^5 = \frac{ m_{1,\beta}(x) m_{1,\beta}(x+\frac{1}{2}) }{ m_{2,\beta}(y)m_{1,\beta}(1-x) }. \label{eqn_3_case2}
\end{equation}
Because the right-hand side of \eqref{eqn_3_case2} is real and $|z|=1$, we must have $z^5=\pm1$.

If $z^5=-1$, then \eqref{eqn_3_case2} forces $m_{2,\beta}(y)<0$, which by \eqref{eqn_2_case2} requires $m_{2,\beta}(x)<0$. By Lemma \ref{unique_Zero_m2}, $m_{2,\beta}$ has a unique zero and remains negative to its right, so $m_{2,\beta}(x+\frac{1}{2})<0$. Using \eqref{1c} to eliminate $c_1/c_0$ from \eqref{1a}, we obtain
\begin{equation}\label{eqn_4_case2}
m_{0,\beta}(x)-m_{0,\beta}(1-x) - \frac{m_{1,\beta}(x)} {m_{2,\beta}(y)} m_{2,\beta}\left(x+\tfrac{1}{2}\right) = 0. 
\end{equation}
Since $0\le x\le 1/4$, we have $x < 1-x$. Because $m_{0,\beta}$ is strictly increasing, $m_{0,\beta}(x)-m_{0,\beta}(1-x)<0$. Furthermore, the product of the remaining terms in \eqref{eqn_4_case2} is strictly negative. Thus, the left-hand side is strictly negative, reaching a contradiction. Therefore, we must have $z^5=1$.

With $z^5=1$, \eqref{eqn_2_case2} and \eqref{eqn_3_case2} force $m_{2,\beta}(x)>0$ and $m_{2,\beta}(y)>0$. Isolating $c_1/c_0$ in \eqref{1d} and substituting into \eqref{1b} yields
$$m_{3,\beta}(x) = \frac{m_{1,\beta}(1-x)} {m_{1,\beta}(x+\frac{1}{2})} \left[ m_{0,\beta}\left(x+\tfrac{1}{2}\right) + m_{5,\beta}\left(x+\tfrac{1}{2}\right) \right] > 0.$$
Applying the reflection identity $m_{3,\beta}(x)=m_{2,\beta}(1-x)$, we obtain $m_{2,\beta}(1-x)>0$. Since $1-x \ge 3/4$ and $m_{2,\beta}(1)<0$, the unique zero of $m_{2,\beta}$ must lie to the right of $1-x$. Consequently, 
\begin{align}\label{eqn_5_case2}
    m_{2,\beta}\left(\tfrac{3}{4}\right)>0.
\end{align}

From \eqref{eqn_2_case2}, we have
$$ \frac{m_{2,\beta}(x)}{m_{1,\beta}(x)} \frac{m_{2,\beta}(y)}{m_{1,\beta}(y)} =1. $$
Since both factors are positive, one is at most 1 and the other at least 1. Since $F_\beta$ in Lemma \eqref{F_xi_lemma} has a unique zero, that zero lies in $[x,y]$. For $0\le x\le y\le \frac{1}{2}$, we have $F_\beta(1/2)\le 0$. Thus, from Lemma \ref{F_xi_lemma} $(iii)$, we have $m_{2,\beta}(3/4)<0$, for any $\beta>0$ which is contradicting \eqref{eqn_5_case2}. Hence, $\ker\Phi(z\,; x)=\{0\}$.
\end{pf}

\begin{prop}
    For the window function $g$ in \eqref{window}, $(\frac{1}{2},1)\in \mathcal{F}(g)$.
\end{prop}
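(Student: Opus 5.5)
We specialize Theorem \ref{rank equiv} to $\alpha\beta=\frac12$, that is $p=1$ and $q=2$, with $\beta=1$ and $\tau=2\pi$. Since $e^{-2\pi i a}=-1$, the matrix $\Phi(z;x)$ is the $2\times1$ column
$$\Phi(z;x)=\begin{bmatrix}\mathcal P_1(z;x)\\ \mathcal P_1(-z;x)\end{bmatrix},$$
so full rank $p=1$ fails exactly when $\mathcal P_1(z;x)=\mathcal P_1(-z;x)=0$ for some $z\in\mathbb T$ and $x\in[0,1)$. By Lemma \ref{symmetry_singular_values} we may take $x\in[0,\frac12]$. Adding and subtracting the two equations, and writing $w=z^2\in\mathbb T$, this is equivalent to the simultaneous vanishing of two real-coefficient quadratics:
$$E(w):=m_4(x)w^2+m_2(x)w+m_0(x)=0,\qquad O(w):=m_5(x)w^2+m_3(x)w+m_1(x)=0,$$
where $m_l=m_{l,1}$. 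By \eqref{reflectio_on_m}, $m_4(x)=m_1(1-x)$, $m_5(x)=m_0(1-x)$ and $m_3(x)=m_2(1-x)$. Since $m_0$ and $m_1$ are positive on $(0,1]$, the leading and constant coefficients of both quadratics are positive. (At $x=0$ we have $m_0(0)=0$; that endpoint is handled separately, using the remark on $\mathcal Z_g(t,0)$ and the explicit values \eqref{m1_m2_at_0}.)

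\emph{Case 1: the common root $w$ is non-real.} Then $w$ and $\bar w$ are the two roots of each quadratic. Since $|w|=1$, the product of the roots is $1$, which forces $m_0(x)=m_4(x)=m_1(1-x)$ and $m_1(x)=m_5(x)=m_0(1-x)$. Moreover $E$ and $O$ must be proportional, so $m_2(x)/m_4(x)=m_3(x)/m_5(x)$. The first two identities give
$$\frac{m_1}{m_0}(x)=\frac{m_0}{m_1}(1-x).$$
By Remark \ref{m1/m2_decreasing}, $m_1/m_0$ is decreasing, so the left side is decreasing in $x$ while the right side is increasing in $x$. Hence there is at most one solution, and I would check it is $x=\frac12$, where the condition reads $m_0(\frac12)=m_1(\frac12)$. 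With $R=e^{2\pi}$ and $Y=e^{\pi}$, I would evaluate $m_1(\frac12)-m_0(\frac12)$ explicitly via \eqref{new_m1_m2} and show it is nonzero; I expect $m_1\gg m_0$, since $R$ is huge. This excludes Case 1.

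\emph{Case 2: $w=\pm1$.} For $w=1$ we need $m_0+m_2+m_4=0$, which is impossible if $m_2(x)\ge0$. So $m_2(x)<0$, that is $x>\theta_1$ by Lemma \ref{unique_Zero_m2}. We also need $m_1+m_3+m_5=0$, which requires $m_3(x)=m_2(1-x)<0$, so $1-x>\theta_1$. For $w=-1$ we need $m_0+m_4=m_2$ and $m_1+m_5=m_3$, so both $m_2(x)$ and $m_2(1-x)$ must be positive, forcing $x,1-x<\theta_1$. In each subcase I would substitute $R=e^{2\pi}$ and $Y=e^{2\pi x}$ into the exact formulas \eqref{new_m1_m2}, clear the positive denominators, and obtain one-variable expressions in $Y\in[1,e^{\pi}]$. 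I would then show that, for instance, the function $m_0+m_2+m_4$ (respectively $m_0+m_4-m_2$) has constant sign on the relevant range. As in Lemma \ref{F_xi_lemma}, this can be done by a shifted-coefficient (Descartes) check or by locating $\theta_1$ numerically with rigorous bounds.

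The main obstacle is Case 2. The sign information in Lemmas \ref{unique_Zero_m2} and \ref{F_xi_lemma} alone does not rule out $m_0+m_2+m_4=0$ or $m_0+m_4=m_2$. I would first try the dominant-term estimate: for $R=e^{2\pi}$, the $P_1(R^2Y)$ term dominates $m_2$ and $m_4$, which should make $m_2+m_4$ (respectively $m_4-m_2$) have a definite sign for $x\in[0,\frac12]$. If that is not sharp enough, I would fall back on the exact polynomial verification in $Y$, in the same style as the MATLAB-certified expansions used earlier.
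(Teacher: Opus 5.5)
Your reduction to the column $(\mathcal P_1(z;x),\mathcal P_1(-z;x))^{T}$, the passage to the real quadratics $E,O$ in $w=z^2$, and the split into $w\notin\mathbb R$, $w=1$, $w=-1$ follow the paper's skeleton exactly, but as written the argument does not close. \textbf{Case 1.} The monotonicity step is wrong. Since $m_0/m_1$ is increasing, $x\mapsto (m_0/m_1)(1-x)$ is \emph{decreasing}, just like $(m_1/m_0)(x)$. So you get no uniqueness and no reduction to checking $x=\tfrac12$. What you need, and what the paper uses, is the strict inequality $m_1>m_0$ on $[0,1]$. This follows from Remark \ref{m1/m2_decreasing} together with $m_1(1)/m_0(1)=R^2+R^{-2}>1$ and $m_0(0)=0<m_1(0)$. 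With it, your two identities give $m_0(x)=m_1(1-x)>m_0(1-x)=m_1(x)>m_0(x)$, which is absurd. The paper reaches the same conclusion from the imaginary part of $E(w)/w$ and the bound $m_4(x)=m_1(1-x)>m_0(1-x)\ge m_0(x)$.

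\textbf{Case 2.} This is left as a plan, and the plan needs two corrections. For $w=1$ you already showed that a solution forces $\theta_1<\min(x,1-x)\le\tfrac12$, so it suffices to know $m_2(\tfrac12)>0$ at $\beta=1$. Contrary to your remark, this does follow from the proof of Lemma \ref{F_xi_lemma}(iii): there $\operatorname{sgn}F_\beta(\tfrac12)=\operatorname{sgn}A(s)$, whose unique root satisfies $s_A<\tfrac75$, while here $s=e^{\pi/2}>\tfrac75$; hence $m_2(\tfrac12)>m_1(\tfrac12)>0$. For $w=-1$ the real obstacle is that $D(x)=m_2(x)-m_0(x)-m_1(1-x)=-E(-1)$ does \emph{not} have strict constant sign on $[0,\tfrac12]$. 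By \eqref{m1_m2_at_0} and \eqref{m1_at1}, $m_2(0)=m_1(1)=m_4(0)$, so $D(0)=0$. Since $m_2$ and $m_4$ coincide at $x=0$, no dominant-term size comparison between them can fix the sign of $D$ near $x=0$. You must first split off the vanishing factor, as the paper does: $D=\frac{(R^2+1)(Y^2-1)}{2R^5Y^3}H(R,Y)$, with $H>R^7Y^2\bigl[R-4(\sqrt R+1)\bigr]>0$ for $1<Y\le\sqrt R$ and $R=e^{2\pi}$. At $x=0$ itself $E(-1)=0$ genuinely holds, so you need the odd equation there: $O(-1)=m_1(0)-m_2(1)+m_0(1)>0$ by \eqref{m1_m2_at_0} and \eqref{m2_at1}. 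Your proposed treatment of $x=0$, via the Zak remark and $m_0(0)=0$, does not detect this degeneracy, so that endpoint is not actually covered.
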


\begin{proof}
    By Theorem \ref{rank equiv} and Lemma \ref{symmetry_singular_values}, it is sufficient to prove that for every $\vert{}z\vert{}=1$ and $0 \le x \le 1/2$, the $2 \times 1$ matrix
    $$\Phi(z\,;x) = \begin{pmatrix} \mathcal P_1(z\,;x) \\
    \mathcal P_1(-z\,;x) \end{pmatrix}$$
    has rank one. For simplicity, we write here $m_j = m_{j,1}$. Suppose to the contrary that $\mathcal P_1(z\,;x) =\mathcal P_1(-z\,;x) = 0$. Adding and subtracting these two equations and substituting $w = z^2$ yields the system
    \begin{align}\label{prop_eq1}
        m_0(x) + m_2(x)w + m_4(x)w^2 = 0
    \end{align}
    and
    \begin{align}\label{prop_eqn4}
        m_1(x) + m_3(x)w + m_5(x)w^2 = 0.
    \end{align}
    By Remark \ref{m1/m2_decreasing}, the ratio $m_1(x)/m_0(x)$ is decreasing on $(0,1]$
    At $\beta=1$, $R = e^{2\pi}$ gives
    $$\frac{m_1(1)}{m_0(1)} = \frac{R^4+1}{R^2} = R^2 + R^{-2} > 1.$$
    Hence, $m_1(x) > m_0(x)$ for $0 \le x \le 1$, where the $x=0$ case follows directly from $m_0(0) = 0 < m_1(0)$. Additionally, $m_0$ is increasing on $[0,1]$. 
In Lemma \ref{F_xi_lemma} $(iii)$ evaluated at $\beta=1$, we have $s = e^{\pi/2} > 7/5$. Since the unique zero $s_A$ occurring there satisfies $s_A < 7/5$, we obtain $F_1(1/2) > 0$. Thus, $m_2(1/2) > m_1(1/2) > 0$. By Lemma \ref{unique_Zero_m2}, $m_2$ possesses only one zero and remains strictly positive before it; consequently
    $m_2(x) > 0$, $0 \le x \le \frac{1}{2}$. 
    
    We now distinguish the possible values of $w$. If $w \notin \{-1, 1\}$, we divide \eqref{prop_eq1} by $w$, we obtain $m_2(x) + m_0(x)\overline{w} + m_4(x)w = 0$. Taking the imaginary parts gives
    $$\bigl(m_4(x) - m_0(x)\bigr)\Im w = 0.$$
    Since $0 \le x \le 1/2$, we have $1-x \ge x$. Applying \eqref{reflectio_on_m}, and the monotonicity of $m_0$ yields
    $$m_4(x) = m_1(1-x) > m_0(1-x) \ge m_0(x),$$
    which is impossible because $\Im w \neq 0$.

Now, suppose $w = 1$. Equation \eqref{prop_eq1} reduces to $m_0(x) + m_2(x) + m_4(x) = 0$, which is impossible because all three terms are nonnegative, $m_4(x)\ge m_0(x)\ge 0$ and $m_2(x) > 0$. 

It remains only to consider $w = -1$. In this case, \eqref{prop_eq1} becomes
\begin{align}\label{prop_eqn2}
    m_2(x) = m_0(x) + m_1(1-x).
\end{align}
Define $D(x) = m_2(x) - m_0(x) - m_1(1-x)$. Here, $R = e^{2\pi}$ and $Y = e^{2\pi x}$, we have $1 < Y \le \sqrt{R}$. For $1-x$, the corresponding exponential variable is $e^{2\pi(1-x)} = R/Y$. Therefore,
$$m_1(1-x) = \frac{P_1(R^2/Y) - C_1(R)P_1(R/Y)}{2R^3(R/Y)^3} = \frac{Y^6P_1(R^2/Y) - C_1(R)Y^6P_1(R/Y)}{2R^6Y^3},$$
where $P_1(Y)$ is defined in \eqref{P1_Y_eqn}. Consequently, we obtain from \eqref{new_m1_m2} that
\begin{align*}
    %2R^5Y^3D(x) =&P_1(R^2Y)-C_1(R)P_1(RY) +\bigl(C_2(R)-R^6\bigr)P_1(Y)-Y^6P_1(R^2/Y) +C_1(R)Y^6P_1(R/Y)\\
    D(x) =& \frac{(R^2+1)(Y^2-1)}{2R^5Y^3}\, H(R,Y),
\end{align*}
where
\begin{align} \label{prop_eqn3}
H(R,Y) =& R^8Y^2 -2R^7Y^3 -2R^7Y+R^6Y^4 +R^6 +2R^4Y^2 +R^2Y^4 +R^2 -2RY^3 -2RY+Y^2.
\end{align} 
An exact symbolic expansion is reproduced by the \href{https://github.com/Riya74012/Gabor-even-rational-windows-symbolic-verification}{MATLAB} code for verification.
For $0<x\le 1/2$, we have $1<Y\le\sqrt R$. Since all the
remaining terms in $H$ are positive,
\[
H(R,Y)>R^8Y^2-2(R^7+R)(Y^3+Y).
\]
As $R>1$, we have $R^7+R<2R^7$, while
\[
Y^3+Y
=
Y^2\left(Y+\frac1Y\right)
\le
Y^2(\sqrt R+1).
\]
Therefore,
\[
H(R,Y)
>
R^7Y^2\left[R-4(\sqrt R+1)\right].
\]
Here $R=e^{2\pi}$ and hence $\sqrt R=e^\pi>8$, so
$R-4(\sqrt R+1)>0$. Thus
$H(R,Y) > 0$ and, consequently $D(x)>0$. Therefore, \eqref{prop_eqn2} is impossible for $0 < x \le 1/2$. At $x=0$, we have $D(0) = 0$, but evaluating \eqref{prop_eqn4} with $w = -1$ gives
$$m_1(0) - m_3(0) + m_5(0) = m_1(0) - m_2(1) + m_0(1) > 0,$$
because $m_1(0) > 0$, $m_2(1) < 0$, and $m_0(1) > 0$. Hence, \eqref{prop_eq1} and \eqref{prop_eqn4} cannot vanish simultaneously at $x=0$.

Therefore, we have rigorously shown that $P_1(z;x)$ and $P_1(-z;x)$ never vanish simultaneously for $\vert{}z\vert{}=1$ and $0 \le x \le 1/2$. Thus $\operatorname{rank}\Phi(z;x) = 1$, throughout the required domain. By Theorem \ref{rank equiv}, $\mathcal{G}\left(g,\frac{1}{2},1\right)$ is a frame.

\end{proof}

\section{Non-Frame Obstructions}\label{Section_4}
\textbf{When $\alpha\beta=1$.}
From \eqref{final Phi_alpha matrix}, we have 
$$\Phi(z\,;\xi)=\mathcal{P}_\beta(z\,;\xi).$$ 
Further, from Remark \ref{Self_reciprocal_P}, we obtain that
$$\mathcal{P}_\beta(-1\,;1/2)=0$$
for every $\beta>0$. Consequently, therefore $\mathcal{G}(g, \alpha, \beta)$ is not a frame.

\begin{proof}[\textbf{Proof of Theorem \ref{non_frame_1}}]
Using the partial fraction representation of $g$ in \eqref{Partial_fraction}, together with 
    $$\sum_{k\in\Z}\frac{(-1)^k}{(t-k)^2+c^2}=\frac{\pi \cos{(\pi t)}\sinh{(\pi c)}}{c(\sinh^2{(\pi c)}+\sin^2{(\pi t)})},$$
    we obtain from the definition of the Zak transform that
    \begin{align}\label{Zak_g_in_Thm1.4}
        \mathcal{Z}_{g,\beta}\left(\frac{1}{6},\frac{1}{2}\right)&=\sum_{k\in \Z}(-1)^k\,g_{\beta}\left(\frac{1}{6}-k\right)\nonumber\\
        &= \frac{\pi\sqrt{3}\,\beta}{2} \sum_{j=1}^3 \frac{d_j}{j} \frac{\sinh(j\pi\beta)} {\sinh^2(j\pi\beta)+\frac{1}{4}},
    \end{align}
    where $d_1=-1/12$, $d_2=1/3$, and $d_3=-1/4$.
   Let $u=e^{-\pi \beta}\in (0,1)$. By expressing the hyperbolic functions in terms of $u$, we have
   $$\sinh(j\pi\beta) = \frac{1-u^{2j}}{2u^j} \quad \text{and} \quad \sinh^2(j\pi\beta)+\frac{1}{4} = \frac{1-u^{2j}+u^{4j}}{4u^{2j}}.$$
   Taking the ratio simplifies the terms inside the summation to
   $$\frac{\sinh(j\pi\beta)} {\sinh^2(j\pi\beta)+\frac{1}{4}} = \frac{2u^j(1-u^{2j})} {1-u^{2j}+u^{4j}}.$$
   Substituting this back into \eqref{Zak_g_in_Thm1.4} yields
   $$\mathcal{Z}_{g_\beta}\left(\frac{1}{6},\frac{1}{2}\right) = \frac{\pi\sqrt{3}\,\beta}{12} \left[ -\frac{u(1-u^2)}{u^4-u^2+1} +\frac{2u^2(1-u^4)}{u^8-u^4+1} -\frac{u^3(1-u^6)}{u^{12}-u^6+1} \right]=f_\beta(u)H_1(u),$$
   where
    $$f_\beta(u)=\frac{\pi \sqrt{3}\beta}{12}\frac{u(u-1)^3(u+1)(u^2+1)}{(u^4-u^2+1)(u^8-u^4+1)(u^{12}-u^6+1)}$$
    and 
    \begin{align}\label{H1_polynomial}
        H_1(u)=u^{16}-u^{14}-2u^{11}-4u^{10}-4u^9-3u^8-4u^7-4u^6-2u^5-u^2+1.
    \end{align}
    The \href{https://github.com/Riya74012/Gabor-even-rational-windows-symbolic-verification}{MATLAB} code reproduces an exact symbolic expansion for verification.

    For $0<u<1$, $f_\beta(u)\ne 0$ but $H_1(0)=1$ and $H_1(1)=-23$. Hence, there exists a $u_0\in (0,1)$ such that $H_1(u_0)=0$, \textit{i.e.}, $\beta_0=-\frac{\log u_0}{\pi}>0$.
    Numerically, $u_0\approx 0.608151$ and $\beta_0\approx 0.1583$. Thus, $\mathcal{Z}_{g_{\beta_0}}\left(\frac{1}{6},\frac{1}{2}\right)=0$. Using evenness $g_{\beta_0}$ and Zak quasi-periodicity, we obtain
    $$\mathcal{Z}_{g_{\beta_0}}\left(-\frac{1}{6},\frac{1}{2}\right)=0\text{ and }\mathcal{Z}_{g_{\beta_0}}\left(-\frac{1}{2},\frac{1}{2}\right)=0.$$\\
    \noindent
    $(i)$ \textbf{When }\bm{$a=\frac{1}{3}$}. For $p=1$, $q=3$, at $t=\frac{1}{6}$ and $ x=\frac{1}{2}$, the three entries of the Zibulski-Zeevi column vector vanish, 
    $$\mathcal{Z}_{g_{\beta_0}}\left(\frac{1}{6},\frac{1}{2}\right)=0, \quad\mathcal{Z}_{g_{\beta_0}}\left(-\frac{1}{6},\frac{1}{2}\right)=0,\text{ and }\mathcal{Z}_{g_{\beta_0}}\left(-\frac{1}{2},\frac{1}{2}\right)=0$$
    up to unimodular factors. Hence, $\rank \mathbf{M}\left(\frac{1}{6},\frac{1}{2}\right)=0$ and therefore, $\mathcal{G}(g_{\beta_0},\frac{1}{3},1)$ is not a frame. Consequently, the obstruction lattice is $P_{1/3}\approx(2.1056, 0.1583)$.\\
    \noindent
    $(ii)$ \textbf{When }\bm{$a=\frac{2}{3}$}. For $p=2$, $q=3$, at $t=\frac{5}{6}$ and $ x=0$.  Then the entries of the second column of $\mathbf{M}\left(\frac{5}{6},0\right)$ vanish
    $$\mathcal{Z}_{g_{\beta_0}}\left(\frac{5}{6},\frac{1}{2}\right)=-\mathcal{Z}_{g_{\beta_0}}\left(-\frac{1}{6},\frac{1}{2}\right)=0,$$
    $$\mathcal{Z}_{g_{\beta_0}}\left(\frac{1}{6},\frac{1}{2}\right)=0,\text{ and }\mathcal{Z}_{g_{\beta_0}}\left(-\frac{1}{2},\frac{1}{2}\right)=0.$$
  Hence, 
    $$\rank \mathbf{M}\left(\frac{5}{6},0\right)\le 1<2$$ 
    and therefore, $\mathcal{G}(g_{\beta_0},\frac{2}{3},1)$ is not a frame. Consequently, the obstruction lattice is $P_{2/3}\approx(4.2113, 0.1583)$.

Using Fourier duality and dilation, it is convenient to work with 
$$h_\lambda(\xi) = e^{-\lambda|\xi|} -2e^{-2\lambda|\xi|} +e^{-3\lambda|\xi|},$$
where $\lambda=\frac{2\pi\beta}{a}$.\\
\noindent
$(iii)$ \textbf{When }\bm{$a=\frac{1}{2}$}. Now we compute
\begin{align}\label{Zak_at_a_1/2}
    \mathcal{Z}h_\lambda\!\left(\frac{1}{4},\frac{1}{2}\right) &= \sum_{k\in\mathbb{Z}} (-1)^k h_\lambda\!\left(\frac{1}{4}-k\right)\nonumber\\
    &= \frac{e^{-\lambda/4} - e^{-3\lambda/4}}{1+e^{-\lambda}} - 2\frac{e^{-2\lambda/4} - e^{-6\lambda/4}}{1+e^{-2\lambda}} + \frac{e^{-3\lambda/4} - e^{-9\lambda/4}}{1+e^{-3\lambda}}.
\end{align}
Let
$ r=e^{-\lambda/4}=e^{-\pi\beta}\in(0,1)$. Then \eqref{Zak_at_a_1/2} becomes
\begin{align}
\mathcal{Z}h_\lambda\!\left(\frac{1}{4},\frac{1}{2}\right)&= \frac{r-r^3}{1+r^4} - 2\frac{r^2-r^6}{1+r^8} + \frac{r^3-r^9}{1+r^{12}}\nonumber\\
  &= -\frac{ r(r-1)^3(r+1)(r^2+1) }{ (r^4+1)(r^8+1)(r^8-r^4+1) } H_2(r),
\end{align}
where
$$H_{2}(r) =r^{12}-r^{10}-2r^9-3r^8-4r^7-4r^6-4r^5-3r^4-2r^3-r^2+1.$$
An exact symbolic expansion is reproduced by the \href{https://github.com/Riya74012/Gabor-even-rational-windows-symbolic-verification}{MATLAB} code for verification. Since $H_{2}(0)=1$ and $H_{2}(1)=-22$, there exists $r_0\in(0,1)$ such that
$H_{2}(r_0)=0.$
Let $\beta_0 = -\frac{\log r_0}{\pi}.$ Numerically, $r_0\approx 0.511123$ and $\beta_{0}\approx 0.213632$.
Hence,
$$ \mathcal Zh_\lambda\!\left(\frac14,\frac12\right)=0. $$
Since $h_\lambda$ is even,
$$ \mathcal Zh_\lambda\!\left(-\frac14,\frac12\right)=0. $$ 
For $p=1,q=2$, the column vector vanishes. Hence,
$a=\frac12$
is not a universal frame hyperbola. Consequently, the obstruction lattice is $P_{1/2}\approx(2.3405, 0.2136)$.\\
\noindent
$(iv)$ \textbf{When }\bm{$a=\frac{3}{4}$}. At $(t, \xi) = (\frac{1}{2},0)$, we have
$$\mathbf{M}\left(\frac{1}{2},0\right) = \begin{pmatrix}
\mathcal Zh_\lambda(\frac{1}{2},0) & \mathcal Zh_\lambda(\frac{1}{2},\frac{1}{3}) & \mathcal Zh_\lambda(\frac{1}{2},\frac{2}{3}) \\[0.25em] 
\mathcal Zh_\lambda(-\frac{1}{4},0) & \mathcal Zh_\lambda(-\frac{1}{4},\frac{1}{3}) & \mathcal Zh_\lambda(-\frac{1}{4},\frac{2}{3}) \\[0.25em]
\mathcal Zh_\lambda(-1,0) & \mathcal Zh_\lambda(-1,\frac{1}{3}) & \mathcal Zh_\lambda(-1,\frac{2}{3}) \\[0.25em]
\mathcal Zh_\lambda(-\frac{7}{4},0) & \mathcal Zh_\lambda(-\frac{7}{4},\frac{1}{3}) & \mathcal Zh_\lambda(-\frac{7}{4},\frac{2}{3}) \end{pmatrix}$$
Let $C_0, C_1$, and $C_2$ denote the three columns of this matrix and $\omega = e^{2\pi i/3}$. Now 
$$C_1 - \omega C_2 = \begin{pmatrix} \mathcal Zh_\lambda(\frac{1}{2},\frac{1}{3}) - \omega \mathcal Zh_\lambda(\frac{1}{2},\frac{2}{3}) \\[0.25em]
\mathcal Zh_\lambda(-\frac{1}{4},\frac{1}{3}) - \omega \mathcal Zh_\lambda(-\frac{1}{4},\frac{2}{3}) \\[0.25em] 
\mathcal Zh_\lambda(-1,\frac{1}{3}) - \omega \mathcal Zh_\lambda(-1,\frac{2}{3}) \\[0.25em]
\mathcal Zh_\lambda(-\frac{7}{4},\frac{1}{3}) - \omega \mathcal Zh_\lambda(-\frac{7}{4},\frac{2}{3}) \end{pmatrix}.$$
Using evenness and Zak quasi-periodicity, the first and third components vanish identically, \textit{i.e.}, 
$$(C_1 - \omega C_2)_0 = 0\qtq{and}(C_1 - \omega C_2)_2 = 0.$$
The remaining two components are obtained by direct geometric-series summation and have the form
$$(C_1 - \omega C_2)_1 = -(C_1 - \omega C_2)_3 = \Gamma(r)H_1(r),$$
where $r = e^{-\lambda/4}\in (0,1)$, $H_1(r)$ is a polynomial in $r$ defined in \eqref{H1_polynomial} and 
$$\Gamma(r) = \frac{ 2\sqrt{3}\,r(-\sqrt{3}+i)(r-1)^3(r+1)(r^2+1) }{ 4\displaystyle\prod_{\varepsilon=\pm1} (r^2+\varepsilon\sqrt{3}\,r+1) (r^4+\varepsilon\sqrt{3}\,r^2+1) (r^6+\varepsilon\sqrt{3}\,r^3+1) }$$
is a non-zero scaling factor. Hence, if $r=u_0$ is the root of $H_1$, then $C_1 = \omega C_2$.
Since the second and third columns are linearly dependent, we have
$$\operatorname{rank} \mathbf{M}\left(\frac{1}{2},0\right) \le 2 < 3.$$
Therefore, $a = \frac{3}{4}$ is not a universal frame curve. Here, $r=u_0\approx 0.60815$ implies that $\beta_0=-\frac{3}{2\pi}\log u_0\approx 0.23746$. Consequently, the obstruction lattice is $P_{3/4}\approx(3.1585, 0.23746)$.
\end{proof}

\begin{conjecture}\label{Conjecture}
 For every $p \in \mathbb{N}$, there exists $(\alpha_p, \beta_p) \in \mathbb{R}_+^2$ such that $\alpha_p\beta_p = \frac{p}{p+1}$ and $(\alpha_p, \beta_p) \notin \mathcal{F}(g)$.
\end{conjecture}
The above conjecture is true for $p=1,2,3$, which is proved analytically by the explicit obstruction arguments given in Theorem \ref{non_frame_1}. Now, we briefly describe the evidence supporting the preceding conjecture.

It is sufficient to construct, for each $p\ge 2$, a single value of $\beta > 0$ for which the Zibulski--Zeevi matrix associated with $\mathcal{G}(h_\lambda, a,1)$ loses rank. We choose $t = 0$ and $x = \frac{1}{2p}.$ At this point, the $(p+1) \times p$ matrix is given by
    $$M_{sr} = \mathcal Zh_\lambda\left( -a s, \frac{r+1/2}{p} \right), \quad 0 \le s \le p, \quad 0 \le r \le p-1.$$
    Define a $p\times p$, $U$ matrix with entries
    $$U_{rk} = \frac{1}{\sqrt{p}} e^{-2\pi i(r+1/2)k/p}, \quad 0 \le r, k \le p-1.$$
    The matrix $U$ is unitary. Let $B = MU$. Since $U$ is invertible,
\begin{align}\label{rank_B_and_rank_M}
    \operatorname{rank} B = \operatorname{rank} M.
\end{align}
The entries of $B$ are
\begin{align*}
    B_{sk}&= \frac1{\sqrt p} \sum_{r=0}^{p-1} Zh_\lambda\left( -as,\frac{r+1/2}{p} \right) e^{-2\pi i(r+1/2)k/p}\\
    &=\frac1{\sqrt p} \sum_{r=0}^{p-1} \sum_{m\in\mathbb Z} h_\lambda(-as-m) e^{2\pi i m(r+1/2)/p} e^{-2\pi i(r+1/2)k/p}\\
    &=\frac1{\sqrt p} \sum_{m\in\mathbb Z} h_\lambda(-as-m) e^{\pi i(m-k)/p} \sum_{r=0}^{p-1} e^{2\pi i r(m-k)/p}.
\end{align*}
The inner sum $\sum_{r=0}^{p-1} e^{2\pi i r(m-k)/p}$ is equal to $p$ precisely when $m\equiv k\pmod p$, and is zero otherwise. Substituting $m = k + np$ for $n \in \mathbb{Z}$ yields
\begin{align}\label{relation_B_and_H}
    B_{sk}= \sqrt{p} \sum_{n\in\mathbb{Z}} (-1)^n h_\lambda(-as-k-np)=\sqrt{p}\,H_{\lambda,p}(as+k),
\end{align}
where
$$H_{\lambda,p}(y) = \sum_{n\in\mathbb{Z}} (-1)^n h_\lambda(y+np).$$
For $c>0$, define
$$S_{c,p}(y)
=\sum_{n\in\mathbb Z} (-1)^n e^{-c|y+np|}.$$
For $0\le y\le p,$ we obtain
$$\sum_{n=0}^{\infty} (-1)^ne^{-c(y+np)} = \frac{e^{-cy}}{1+e^{-cp}}.$$
Therefore,
$$S_{c,p}(y) = \frac{e^{-cy}-e^{-c(p-y)}}{1+e^{-cp}}, \quad 0\le y\le p.$$
Furthermore, $S_{c,p}(p-y) = -S_{c,p}(y)$ and $S_{c,p}(y+p) = -S_{c,p}(y)$. Consequently, we have
$$H_{\lambda,p} = S_{\lambda,p} - 2S_{2\lambda,p} + S_{3\lambda,p}$$
with
\begin{align}\label{matrix_H}
    H_{\lambda,p}(p-y) = -H_{\lambda,p}(y)\qtq{and}H_{\lambda,p}(y+p) = -H_{\lambda,p}(y).
\end{align}
For $1\le s\le p$, we obtain from \eqref{matrix_H} that
\begin{align}\label{zeroth_col}
B_{p+1-s,0}
&=\sqrt p\,
H_{\lambda,p}
\bigl(a(p+1-s)\bigr)\nonumber\\
&=\sqrt p\,
H_{\lambda,p}(p-as)=-B_{s0}.
\end{align}
Now let $1\le k\le p-1$. Then, we obtain from \eqref{matrix_H} that
\begin{align}\label{middle_row}
    B_{p+1-s,k}=
\sqrt p\,
H_{\lambda,p}(p-as+k)=B_{s,p-k}.
\end{align}
Similarly
\begin{align}\label{zeroth_row}
B_{0,p-k}=\sqrt{p}H_{\lambda,p}(p-k)=-\sqrt{p}H_{\lambda,p}(k)=-B_{0k}.
\end{align}
Define an involution $T_c$ on the column space $\mathbb C^p$ by
$$T_ce_0=-e_0, \quad T_ce_k=e_{p-k}, \quad1\le k\le p-1,$$
and an involution $T_r$ on the row space $\mathbb C^{p+1}$ by
$$ T_rf_0=-f_0, \quad T_rf_s=f_{p+1-s}, \quad1\le s\le p.$$
Equations \eqref{zeroth_col}, \eqref{middle_row}, and \eqref{zeroth_row} imply
\begin{align}\label{TrB_BTc}
    T_rB=BT_c.
\end{align}
Thus $B$ maps each eigenspace of $T_c$ into the corresponding eigenspace of $T_r$. Now, we divide the proof based on whether $p$ is even or odd.\\
\noindent
\textbf{Case 1: $p$ is even.} Let $p=2m$, for $m\in \N$. Define the linear map $T_c:\mathbb C^p\to\mathbb C^p$ on the standard basis $e_0,\ldots,e_{p-1}$ by
$$ T_ce_0=-e_0, \quad T_ce_k=e_{p-k}, \quad 1\le k\le p-1.$$
Since $p=2m$, the index $m=p/2$ is fixed
$ T_ce_m=e_m. $
Also, for each
$ 1\le k\le m-1,$
the two indices $k$ and $p-k$ are distinct and are interchanged by $T_c$. Thus the $+1$-eigenspace of $T_c$ is
$$ E_c^+ = \{c\in\mathbb C^p:T_cc=c\}, $$
and has the orthonormal basis
\begin{align}\label{bases_Ec}
     u_k=\frac{e_k+e_{p-k}}{\sqrt2}, \quad 1\le k\le m-1, 
\end{align}
together with $u_m=e_m.$ Hence
$\operatorname{dim} E_c^+=m=\frac{p}{2}.$

Similarly, define $T_r:\mathbb C^{p+1}\to\mathbb C^{p+1}$ on the standard basis $f_0,\ldots,f_p$ by
$$ T_rf_0=-f_0, \quad T_rf_s=f_{p+1-s}, \quad1\le s\le p.$$
Since $p+1=2m+1$, the indices $1,\ldots,p$ split into the $m$ pairs
$ (s,p+1-s)$, $1\le s\le m. $
Therefore the $+1$-eigenspace
$$ E_r^+ = \{y\in\mathbb C^{p+1}:T_ry=y\} $$
has the orthonormal basis
\begin{align}\label{bases_Er}
     v_s= \frac{f_s+f_{p+1-s}}{\sqrt2}, \quad 1\le s\le m.
\end{align}
Thus, $\operatorname{dim} E_r^+=m=\frac{p}{2}$.

Let $B_p^+(\beta)$ be the matrix of the restriction $B: E_c^+ \longrightarrow E_r^+$ with respect to the bases \eqref{bases_Ec}, \eqref{bases_Er}. Using \eqref{middle_row} and \eqref{relation_B_and_H}, we have
\begin{align*} 
(B_p^+)_{s,k} &= v_s^* B u_k= \frac{1}{2} \left( B_{s,k} + B_{s,p-k} + B_{p+1-s,k} + B_{p+1-s,p-k} \right)\\
&=B_{s,k} + B_{s,p-k} \\
&=\sqrt{p} \left[ H_{\lambda,p}(as+k) + H_{\lambda,p}(as+p-k) \right],
\end{align*}
for $1 \le s \le m$ and $1 \le k < m$.
For the last column, $u_m = e_m$ and since $p-m = m$, we obtain from \eqref{middle_row} that
\begin{align*} 
(B_p^+)_{s,m} &= v_s^* B e_m= \frac{1}{\sqrt{2}} \left( B_{s,m} + B_{p+1-s,m} \right)\\
&= \sqrt{2p}\, H_{\lambda,p} \left(as+\frac{p}{2}\right). 
\end{align*}
Thus, $B_p^+(\beta)$ is an explicit real $m \times m$ matrix. Define $\Delta_p(\beta) = \det B_p^+(\beta)$. If $\Delta_p(\beta) = 0$, then there exists a nonzero $c \in E_c^+$ such that $B_p^+(\beta)c = 0$. By \eqref{TrB_BTc}, $Bc \in E_r^+$. But $B_p^+ c$ consists precisely of the coordinates of $Bc$ in the basis of $E_r^+$. Thus, $Bc = 0$ for $c\ne 0$. Therefore $\ker B \ne \{0\}$, so $\operatorname{rank} B < p$. Hence, we obtain from \eqref{rank_B_and_rank_M} that $\operatorname{rank} M < p.$\\
\noindent
\textbf{Case 2: $p$ is odd.} Let $p=2m-1$, for $m\in \N$. The $-1$ eigenspace of $T_c$ has dimension $m$, with orthonormal basis
\begin{align}\label{Tc_basis_for_podd}
 u_0 = e_0,\text{ and }  u_k = \frac{e_k-e_{p-k}}{\sqrt{2}}, 
\end{align}
for $1 \le k \le m-1$. The $-1$ eigenspace of $T_r$ also has dimension $m$, with basis
\begin{align}
   v_0 = f_0,\text{ and } v_s = \frac{f_s-f_{p+1-s}}{\sqrt{2}},
\end{align}
for $1 \le s \le m-1$. Let $B_p^-(\beta)$ denote the matrix of $B: E_c^- \longrightarrow E_r^-$. Then $ (B_p^-)_{0,0} = \sqrt{p}\,H_{\lambda,p}(0)$. Using \eqref{zeroth_row}, we obtain
\begin{align*}
    (B_p^-)_{0,k} &= \frac{1}{\sqrt{2}} \left(B_{0,k} - B_{0,p-k}\right)= \sqrt{2}\,B_{0,k}=\sqrt{2p}\,H_{\lambda,p}(k).
\end{align*}
for $1 \le k \le m-1$. Similarly, using \eqref{zeroth_row}, we obtain
\begin{align*}
     (B_p^-)_{s,0} &= \frac{1}{\sqrt{2}} \left( B_{s,0} - B_{p+1-s,0} \right)= \sqrt{2}\,B_{s0}= \sqrt{2p}\,H_{\lambda,p}(as),
\end{align*}
Finally, for $1 \le s, k \le m-1$,
\begin{align*} 
(B_p^-)_{s,k} &= \frac{1}{2}\bigl( B_{s,k} - B_{s,p-k} - B_{p+1-s,k} + B_{p+1-s,p-k} \bigr)\\ 
&= B_{s,k} - B_{s,p-k}, \end{align*}
where \eqref{middle_row} was used in the last equality. Therefore
$$(B_p^-)_{s,k} = \sqrt{p} \left[ H_{\lambda,p}(as+k) - H_{\lambda,p}(as+p-k) \right].$$
Define $\Delta_p(\beta) = \det B_p^-(\beta)$. Exactly as in the even case, $\Delta_p(\beta) = 0$ implies $\ker B \ne \{0\}$, and consequently $\operatorname{rank} M < p$.

Combining the two cases, define
$$\Delta_p(\beta) = \begin{cases} \det B_p^+(\beta), & p \text{ even},\\ \det B_p^-(\beta), & p \text{ odd}. \end{cases}$$
For every $p \ge 2$, $\Delta_p(\beta) = 0$ implies that $\mathcal{G}(h_\lambda,a,1)$ is not a frame. To provide numerical evidence for Conjecture \ref{Conjecture}, we now investigate positive zeros of \(\Delta_p\).

\subsection{Numerical evidence:} Numerical computations indicate that, for every
$ 2\le p\le200$, the function $\Delta_p$ possesses a positive zero. By the scaling parameter $\mu=\lambda p=2\pi(p+1)\beta$, we choose the two $p$-dependent parameters
$$ \beta_p^-=\frac{9}{20(p+1)},\qquad \beta_p^+=\frac{11}{20(p+1)}.$$
Then the computations consistently give
$$ \Delta_p(\beta_p^-) \Delta_p(\beta_p^+)<0, \qquad 2\le p\le200. $$
If this sign inequality could be established rigorously for all $2\le p\le200$, then continuity of $\Delta_p$ would immediately yield $\beta_p\in(\beta_p^-,\beta_p^+) $
such that $\Delta_p(\beta_p)=0.$
Therefore,
$$ \alpha_p=\frac{p}{(p+1)\beta_p} $$
would then give $\alpha_p\beta_p=\frac{p}{p+1}$, and $(\alpha_p,\beta_p)\notin\mathcal F(g).$
We have added \href{https://github.com/Riya74012/Gabor-even-rational-windows-symbolic-verification}{MATLAB} code for verification.

Interestingly, the determinant $\Delta_p$ may possess more than one positive zero. In particular, for $p=200$, numerical computation reveals a second sign-changing zero
$\beta_{200}^{(H)}\approx1.00996.$
Since
$a=\frac{200}{201}$,
the corresponding value $\alpha_{200}^{(H)} =\frac{200}{201\,\beta_{200}^{(H)}} \approx0.98521$
gives the numerical obstruction point
$$P_{200/201} \approx(0.98521,1.00996).$$
Thus, the computations provide evidence that non-frame obstruction points may occur even in the region $\beta>1$, while $\alpha\beta<1$.

\section*{Data Availability}
The author generated the figures using MATLAB. The data that support the findings of this study are available within the article and its supplementary material.
\section*{Declarations}
\textbf{Conflict of Interest}: The author has no conflicts to disclose.

\bibliographystyle{plain}

\bibliography{totallypositive}
\end{document}